\newtheorem{theorem}{Theorem}
\newtheorem{lemma}{Lemma}
\newtheorem{proposition}{Proposition}
\newenvironment{proof}[1][Proof]{\textbf{#1.} }{\ \rule{0.5em}{0.5em}}
\newcommand{\cN}{\mathcal{N}}
\begin{document}
\thispagestyle{empty}

\title{ 
Optimal Push and Pull-Based Edge Caching\\ For Dynamic Content

}

\author{Bahman Abolhassani, John Tadrous, Atilla Eryilmaz, Serdar Yüksel\\

\thanks{ Manuscript received February 20, 2023; revised October 11, 2023, and accepted December 14, 2023; approved by IEEE/ACM TRANSACTIONS ON NETWORKING Editor Nikhil Karamchandani. Date of publication --, 2024; date of current version January 4, 2024. This work is supported in part by the NSF grants: NSF AI Institute (AI-EDGE) 2112471, CNS-NeTS-2106679,  CNS-NeTS-2007231; and the ONR Grant N00014-19-1-2621; This research was also supported in part by the Natural Sciences and Engineering Research Council (NSERC) of Canada.

B.  Abolhassani is with the Department of Electrical and Computer Engineering, Virginia Tech, Blacksburg, VA 99202 (e-mail:abolhassani@vt.edu).

J. Tadrous is with the Department of Electrical and Computer Engineering, Gonzaga University, Spokane, WA 99202 (e-mail:tadrous@gonzaga.edu).

A. Eryilmaz is with the  Department of  Electrical and  Computer  Engineering, The Ohio State University,  Columbus,  OH  43210  USA  (e-mail:eryilmaz.2@osu.edu).

S Yüksel is with the Department of Mathematics and Statistics, Queen's University,  Kingston, Ontario  (e-mail:yuksel@queensu.ca).
}
}

\maketitle

\thispagestyle{empty}
\pagestyle{empty}

\begin{abstract}

We introduce a framework and optimal `fresh' caching for a content distribution network (CDN) comprising a front-end local cache and a back-end database. The data content is dynamically updated at a back-end database and end-users are interested in the most-recent version of that content. We formulate the average cost minimization problem that captures the system's cost due to the service of aging content as well as the regular cache update cost. We consider the cost minimization problem from two individual perspectives based on the available information to either side of the CDN: the back-end database perspective and the front-end local cache perspective. For the back-end database, the instantaneous version of content is observable but the exact demand is not. Caching decisions made by the back-end database are termed `push-based caching'. For the front-end local cache, the age of content version in the cache is not observable, yet the instantaneous demand is. Caching decisions made by the front-end local cache are termed `pull-based caching'. Our investigations reveal which type of information, updates, or demand dynamic, is of higher value towards achieving the minimum cost based on other network parameters including content popularity, update rate, and demand intensity.

\end{abstract}

\begin{IEEEkeywords}
Content Distribution Networks, Caching, Age of Information, Dynamic Content
\end{IEEEkeywords}

\section{Introduction}
\label{sec:Intro}
The rapid increase in the number of ubiquitous wireless devices has resulted in rapidly escalating levels of data traffic over cellular networks. This surging data demand is depleting the limited spectrum resources for wireless transmission, especially over the wireless connection between the back-end databases and the end-users. With the advent of 5G networks, caching at the wireless edge has been used to accelerate the content download speed and improve the performance of wireless networks \cite{liu2016caching, yao2019mobile, paschos2018role, zhang2018cooperative, abolhassani2019wireless, abolhassani2020delay, jamali2018statistical, pouryousef2023quantum}. However,  with the emergence of new services and application scenarios, such as video on demand, augmented reality, social networking, and online gaming, which produce dynamically changing data over time, the old caching paradigms have become almost inefficient. Content freshness is becoming increasingly significant due to the fast growth of the number of mobile devices and the dramatic rise in real-time applications (e.g., video on demand, live broadcast, etc.) \cite{zhang2019price}. Thus, it is no longer sufficient to serve the content to end-users, as users favor the recent version of data over the older versions. Consequently, contents usually have the greatest value when they are fresh \cite{shapiro1998information}. The service of most fresh content from the back-end database to end-users will considerably congest the network and ultimately impair the quality of experience (QoE). Meanwhile, such trends are accelerating with the growth of cellular traffic in the network. Hence, there is a growing need to develop new caching strategies that account for the refresh characteristics and aging costs of content for efficient dynamic content distribution. 

Numerous works study the dynamic content delivery in caching systems such as \cite{najm2016age, candan2001enabling, shi2002workload, li2003freshness, wu2019dynamic, kam2017information, xu2020proactive, gao2020design,  mehrizi2019popularity, kumar2020optimized, zhong2018two, zhang2020reinforcement, masood2020learning, azimi2018online, zhang2019learning, abad2019dynamic} and effective strategies have been proposed.
In particular, authors in \cite{zhong2018two} propose two metrics to measure the cached content freshness: age of synchronization (AoS) and age of information (AoI). Most existing research regarding the freshness of the local cache focus on the AoI metric and often the objective is either to minimize the average AoI or to minimize the cache miss rate. For example, authors in \cite{kam2017information} propose a dynamic model based on AoI in which the rate of requests depends on the popularity and the freshness of information to minimize the number of missed requests.

While AoI is a meaningful metric for measuring the freshness of content in some systems \cite{maatouk2020age, chen2020age, bastopcu2019age, yang2020age}, there are many real-world scenarios where content does not lose its value simply because time has passed since it was put into the cache. These types of dynamic content include news and social network updates where the users prefer to have the freshest version but so long as there is no new update, that content is considered to be the freshest version. In this work, we use a new freshness metric called \emph{Age-of-Version} (AoV) which counts the integer difference between the versions at the database and the local cache \cite{abolhassani2021fresh, abolhassani2020achieving,abolhassani2021single,abolhassani2021optimal,abolhassani2022fresh,abolhassani2023optimal}. We also introduce a new cost function for dynamic content caching which captures both the cost due to the missed event and the cost due to content freshness \cite{wessels2001web} which grows with the AoV metric. Our model extends the traditional caching paradigm to allow for varying \textit{generation dynamics} of content and calls for new designs that incorporate these dynamics into its decisions.

In particular, despite the traditional caching paradigms, for content with varying generation dynamics, it is essential to regularly update the cached content to prevent the cache from being obsolete due to old age. This requires that either the back-end database or the front-edge local cache, routinely check for updates and refresh the local cache if needed. There are two main dynamics at work here. On the back-end side, content is being refreshed at the source, and on the front-end side, requests arrive at the local cache according to a popularity profile. Depending on whether the edge-cache or the back-end database takes control of the cache updates, there will be two different caching strategies that have access to different live information. 

If the responsibility of the cache updates is put on the back-end database, it has access to the most current version of the content and it also knows the version of the cached content at no cost, i.e., does not incur additional cost to keep track of the age of version at the local cache. The back-end database can thus determine the exact age of the cache at any given time and make a decision whether to update the cache or not based on that information. On the other hand, if the front-end edge-cache is responsible for the cache updates,  it will not know the exact age of the cached content. However, the edge-cache knows the exact times of the arrival requests and can choose whether to update the cache or not based on that information. This creates two distinct dynamic caching paradigms that we will discuss in this paper.

There exist works on push-based and pull-based caching for static content such as \cite{hara2002cooperative, gwertzman1995case, gwertzman1997analysis, zhu2004stochastic, kambalakatta2004profile, lee2005performance}.  For example, authors in \cite{hara2002cooperative} consider a system where the server repeatedly broadcasts data to clients through a broadband channel and propose caching strategies where clients cooperatively cache broadcast data items. Also, \cite{gwertzman1995case} proposes geographical push-caching to benefit the server’s global knowledge of the situation. In another work, authors in \cite{kambalakatta2004profile} propose profile-based caching to determine items to be prefetched and cached depending on a user group and context. However, these works fail to perform well once the static assumption is taken away. In our earlier work \cite{abolhassani2021fresh}, we explored pull-based caching for dynamic content without providing proof of its optimality. There, we showed that when the cache size is limited, only items that are popular enough will be considered for caching. In this work, we not only consider push-based caching alongside pull-based caching, but we also provide the optimality proof for both of the proposed policies. Our focus in this work is on the best approach to manage the cache updates once an item is put in the cache. We will initially study the unlimited buffer case which gives us the foundation to add the buffer constraint.

In this work, we characterize the problem of dynamic content caching from two stances, one from the perspective of the front-end edge-cache and the other from the perspective of the back-end database. More specifically, we present the optimal caching policy for each paradigm and provide detailed proof of their optimality. Comparing the two optimal paradigms for a single item reveals that depending on the refresh rate and popularity of that item, one of the paradigms will outperform the other. We explicitly characterize the conditions on the popularity and refresh rate of the item where each paradigm outperforms the other. This opens the door for a compromised caching paradigm where contents will be divided into two groups based on their popularity and refresh rates. For one group, the edge-cache will take the responsibility for cache updates, and for the other group, the back-end database will be in charge of the cache updates. The result is a caching scheme that outperforms each paradigm if applied exclusively. 

For each specific item, by intelligently choosing whether the edge-cache or the back-end database is in charge of the cache updates and then applying the optimal policy from the perspective of the edge-cache and back-end database to each group, the average caching cost is minimized. We aim to reveal the potential gains by comparing the proposed paradigm to an unachievable genie-aided paradigm where it knows both the exact age of the cache and the exact time of the request arrivals. Our contributions, along with the organization of the paper, are as follows.

\begin{list}{\labelitemi}{\leftmargin=1em}
\item In Section~\ref{sec:Sys_Model}, we present a tractable caching model for serving dynamic content to end-users from a back-end source and formulate the general problem.
\item In Section~\ref{sec:PushBased}, we consider the problem of dynamic content caching from the perspective of the back-end database where the back-end database, which has access to the update information, is in charge of the cache updates. We characterize the structure of the optimal Push Based Edge Caching Policy, and provide detailed proof of its optimality. The optimal policy for the push-based caching paradigm decides to update the cached content once its age reaches a threshold.

\item In Section~\ref{sec:PullBased}, we study the caching problem from the perspective of the front-end edge-cache where the local cache, which has access to demand dynamics, is in charge of the cache updates. We characterize the optimal Pull Based Edge Caching Policy and provide detailed proof of its optimality. The optimal policy for the pull-based caching paradigm decides to update the cache when the elapsed time since the last update is large enough and once there is a request.

\item In Section~\ref{sec:proposed}, comparing the two paradigms for single-item caching reveals scenarios in which each paradigm outperforms the other depending on the popularity and refresh rate of the item. Thus we present a new paradigm where items are divided into two disjoint groups, where the push-based paradigm is applied to one group and the pull-based paradigm is applied to the other. The result is a caching scheme that outperforms both paradigms if utilized exclusively. We then generalize the proposed combined policy to incorporate the limited buffer case. Moreover, we compare the proposed caching scheme with an unachievable genie-aided paradigm to reveal the potential gains that can be achieved by combining the two optimal caching paradigms. Finally, we conclude the work in Section~\ref{sec:Conclusion}.
\end{list}

\section{System Model}
\label{sec:Sys_Model}
We consider the generic hierarchical setting depicted in Fig.~\ref{fig:system,model}, whereby: a local edge-cache serves a user population that generates content requests according to some popularity distribution. The local cache is connected to a back-end database where different contents receive consistent updates. Here is a detailed description of our system's components and associated dynamics.

\noindent \textbf{Demand Dynamics:} We assume that a set $\cN$ of $N$ unit-size data items with dynamically changing content is being served to a user population by the hierarchical caching system in Fig.~\ref{fig:system,model}. Requests arrive at the local cache according to a Poisson process\footnote{Accordingly, we assume that the system evolves in continuous time.} with rate $\beta\geq0$, which captures the request intensity of the user population. An incoming request targets data item $n \in \cN$ with probability $p_n$. Accordingly, the probability distribution $\mathbf{p}=(p_n)_{n=1}^N$ captures the popularity profile of the data items. We consider a binary random variable $r_n(t) \in  \{0,1\}$ to capture the event of item $n$ being requested at time $t$, i.e., $r_n(t) =1$ if there is a request at time $t$, and $r_n(t) =0$ in the case of no request.

\noindent \textbf{Generation Dynamics:} At the database, each data item may receive updates to replace its previous content. We assume that data item $n$ receives updates according to a Poisson process with the rate $\lambda_n\geq 0$. Note that $\lambda_n=0$ encapsulates the traditional case of \textit{static} content that never receives updates. We denote vector $\boldsymbol{\lambda}=(\lambda_n)_{n=1}^N$ as the collection of update rates for the database. We consider a binary random variable $b_n(t) \in  \{0,1\}$ to capture the event of item $n$ receiving an update at the back-end database at time $t$, i.e., $b_n(t) =1$ if there is an update, and $b_n(t) =0$ in the case of no update.

\noindent \textbf{Age Dynamics:} Since the data items are subject to updates at the back-end database, the same items in the local cache may be \textit{older versions} of the content. To measure the freshness of local content, we define the \emph{age} $\Delta_n(t)\in\{0,1,\ldots\}$ at time $t$ of a cached content for item $n$ as the number of updates that the locally available item $n$ has received in the back-end database since it has been most recently cached.  We name this freshness metric as the \emph{Age-of-Version} (AoV) since it counts the integer difference between the versions at the database and the local cache. Now that we have the dynamics defined, we can introduce the key operational and performance costs associated with our caching system. 

\noindent \textbf{Fetching and Aging Costs:} On the operational side, we denote the cost of fetching an item from the back-end database to the local cache by $c_f>0.$ On the performance side, we assume that serving an item $n$ from the local cache with age $\Delta_n(t)$ incurs a \textit{freshness/age} cost of $c_a \times \Delta_n(t)$ for some $c_a \geq 0$, which grows linearly\footnote{While this linearity assumption is meaningful as a first-order approximation to the aging cost and facilitates simpler expressions in the analysis, it can also be generalized to convex forms to extend this basic framework.} with the AoV metric. This aging cost measures the growing discontent of the user for receiving an older version of the content she/he demands. For each data item $n$, we define the binary action $u_n(t) \in \{0,1\}$, to capture the decision of fetching the most recent version of content $n$ to the edge-cache at time $t$, i.e., $u_n(t)=1$. On the other hand, $u_n(t)=0$ captures the case when the request will be served from the edge-cache incurring the freshness cost.

\noindent \textbf{Problem Statement:} 
Our broad objective in this work is to develop efficient caching strategies for the above setting that optimally balance the tradeoff between the cost of frequently updating local content and the cost of providing aged content to the users. In particular, we are interested in finding a policy that minimizes the long-term average cost of the system by deciding when to update the cache and when to serve the requests from the edge-cache. We can express this goal generically as
\begin{equation}
\label{eq:costmin}
\begin{split}
    &\min _{\boldsymbol{u}} \lim _{\mathrm{T} \rightarrow \infty} \frac{1}{T} \mathbb{E}_{x}^{u}\left[\int_{o}^{\mathrm{T}} \sum_{n=1}^N C_{n}^{\pi}\left(x_{n}(t),u_n(t)\right) dt\right]\\
    & s.t. \quad  u_n(t) \in \{0,1\}, \quad \forall n \in \cN , t\geq 0,
\end{split}
\end{equation}
where $u_n(t)\in \{0,1\}$ and $x_n(t)$ are the action and the state of the system at time $t$ for data item $n$, respectively. Thus, $C_{n}^{\pi}\left(x_n(t),u_n(t)\right)$ represents the cost of the system for data item $n$ at time $t$ when the system is at the state $x_n(t)$ and the action $u_n(t)$ is taken under some paradigm $\pi$. The operator $\mathbb{E}_{x}^{u}$ captures the expected value over the system state $x$ for a given action $u$.

Due to the dynamic nature of the content, if cached items are not updated frequently, they will become obsolete and hence increase the average cost. Therefore, an update controller is needed to routinely check for the updates of the cached content and decide when to fetch the freshest version to the local cache.
Such updates can be performed either by the local edge-cache or by the back-end database, resulting in completely different cache update processes. This is due to the fact that the back-end database and the local edge-cache have access to different live information since they are located at the opposite end of the caching system. The former has access to update information and the latter knows the demand information. Subsequently, we consider the average cost minimization from the perspective of the back-end database and the front-end local cache, individually. 

\noindent \textbf{Push-Based Caching:} 
Here, the back-end database decides on when to fetch the freshest version of every content to the local cache, incurring a cost of $c_f$. Moreover, the back-end database knows the exact age of the cached content but has no access to the specific times of request arrivals to the local cache. Therefore, the optimal policy for the back-end database driven paradigm will be to decide when to push the item to the cache given the exact age of the cached content.

\noindent \textbf{Pull-Based Caching:} 
Here, the local edge-cache is responsible for deciding when to request the most recent version of locally cached item $n$ from the back-end database, at a fetching cost of $c_f$. Furthermore, the local cache knows the exact times of the request arrivals but it is not aware of the exact age of the cached content. Therefore, the optimal policy for the edge-cache driven paradigm will be to decide when to pull the item from the database given that it knows the exact times of the request arrivals.

Next, we characterize the optimal caching policies for both paradigms and compare their performance to identify the situations where each is superior to the other. Finally, by combining the two paradigms, we propose a caching scheme that achieves the minimum average cost of two paradigms for each item $n$, outperforming both of the strategies if deployed singly. 

\begin{figure}[t]
\centering
\includegraphics[width=0.5\textwidth]{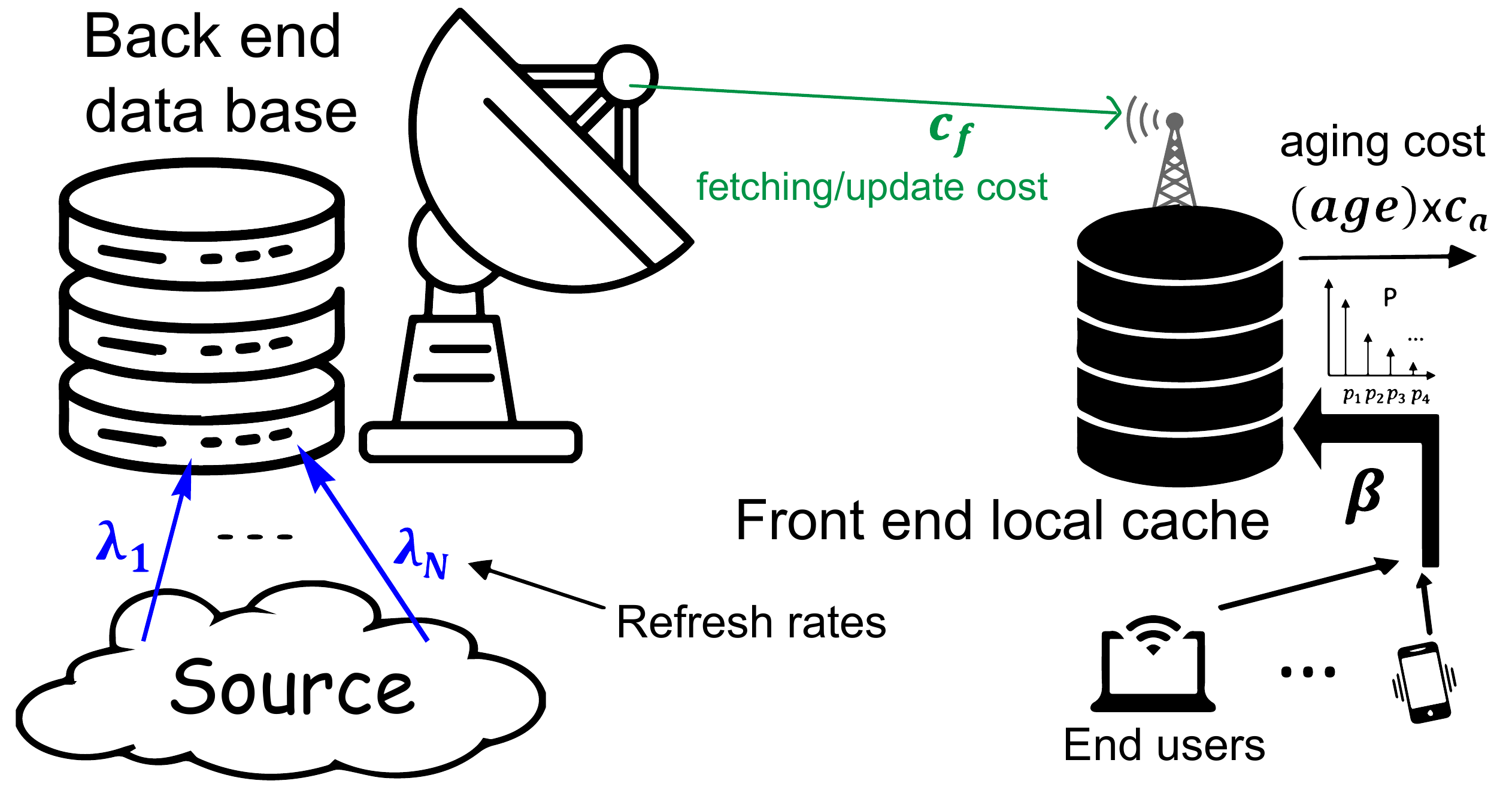}
\caption{Setting of \textit{Fresh Caching for Dynamic Content}}
\label{fig:system,model}
\end{figure}

\section{Push-Based Edge Caching} 
\label{sec:PushBased}
In this section, we focus on the general cost minimization problem expressed in Equation \eqref{eq:costmin} when the back-end database is responsible for updating the edge-cache. The database has instantaneous knowledge of the exact version of each content at any given time. However, the database is not aware of the exact time of the request arrivals to the edge-cache, instead, it only knows the rate of the requests $\beta$ and the popularity profile $\textbf{p}$. Accordingly, the system state accessible for the back-end database driven caching paradigm is $x_n(t)=(\Delta_n(t)), \forall n\in \cN, t\geq 0$. Therefore, at any time $t$, the expected cost of the back-end database driven caching paradigm for data item $n$ can be expressed as:
\begin{equation} 
    \mathbb{E}_{x}^{u}\left[C^B_n(\Delta_n(t),u_n(t))\right]=u_n(t) c_f + \beta p_n  c_a \Delta_n(t),
\end{equation}
where the first term captures the cost of the cache update if the back-end database decides to push the freshest version to the edge-cache, i.e., $u_n(t)=1$, and the second term captures the freshness cost due to serving a potentially aged item from the cache. Thus, the cost minimization problem \eqref{eq:costmin} for the back-end database driven caching paradigm can be expressed as:
\begin{equation} 
\label{eq:costminpush}
\begin{split}
    &\min _{\boldsymbol{u}} \lim _{\mathrm{T} \rightarrow \infty} \frac{1}{T} \int_{o}^{\mathrm{T}} \sum_{n=1}^N \left[ u_n(t) c_f + \beta p_n  c_a  \Delta_n(t)\right]dt\\
    & s.t. \quad  u_n(t) \in \{0,1\}, \quad \forall n \in \cN , t\geq 0,\\
    & \hspace{.1in}  \Delta_n(t+dt) =\Delta_n(t) (1-u_n(t))+b_n(t), \forall n \in \cN , t\geq 0.
\end{split}
\end{equation}
Next, we give the optimal back-end database driven caching policy that solves problem \eqref{eq:costminpush} and call it the optimal policy for the Push-Based Caching paradigm. 
\begin{theorem} \label{thm:costpush}
Policy $\boldsymbol{u}^*$ that solves \eqref{eq:costminpush} is given by:
\begin{equation}
    u^{*}_n(\Delta_n)=\left\{\begin{array}{cc}
1 & \Delta_n>\eta_n^* \\
0 & \Delta_n \leq \eta_n^*,
\end{array}\right.
\end{equation}
where $\eta_n^*=\underset{m \in \{\eta_n,\eta_n+1\} }{\operatorname{argmin}} \hspace{.04in} C^{B}_n(m)$ such that $\eta_n=\left\lfloor\sqrt{2 \lambda_n c_{f} / \beta p_n c_{a}}\right\rfloor$ and $C^{B}_n(m)=\frac{1}{2}\beta p_n c_a (m-1)+\frac{\lambda_n c_f}{m}$. Thus, $C^{B^*}=\sum_{n=1}^{N}  C^{B}_n(\eta_n^*)$ is the optimal cost for the back-end database driven caching paradigm. Under the assumption that $\sqrt{2 \lambda_n c_{f} / \beta p_n c_{a}}$ is an integer, the corresponding optimal average cost is given by:
\begin{equation} 
    C^{B^*}=\sum_{n=1}^N \left[ \sqrt{2 \lambda_n \beta p_n c_{a} c_{f}}-\frac{1}{2} \beta p_n c_{a}\right].
\end{equation}
\end{theorem}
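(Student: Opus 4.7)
The plan is to exploit the separability of \eqref{eq:costminpush} across items and then solve a single-item average-cost Markov decision problem by renewal analysis. Because the running cost $u_n c_f + \beta p_n c_a \Delta_n$ and the age dynamics $\Delta_n(t+dt) = \Delta_n(t)(1 - u_n(t)) + b_n(t)$ depend only on the $n$-th coordinates, and the back-end observes each $\Delta_n$ exactly, the joint problem decomposes into $N$ independent scalar MDPs on state space $\{0,1,2,\ldots\}$, so it suffices to exhibit an optimal policy for each item separately.

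For a single item I would first argue that an optimal stationary policy has threshold form: there is an integer $m^*$ such that $u_n^*=1$ exactly when $\Delta_n$ first hits $m^*$. The controlled process is a pure-birth chain jumping at Poisson rate $\lambda_n$ with an instantaneous reset to $0$ under action $u_n=1$ at cost $c_f$. A monotonicity / interchange argument applied to the relative value function of the average-cost Bellman equation shows that the stopping set $\{\Delta : u_n^*(\Delta)=1\}$ is upward closed. Rigorously, one obtains the relative value function via a vanishing-discount limit, using that any finite-threshold policy induces positive recurrence so that the discounted value functions are equicontinuous in the discount factor. This structural step, together with verifying existence of a solution to the average-cost optimality equation on the infinite state space, is the main technical obstacle; everything after it is essentially a textbook computation.

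Once threshold policies are known to be optimal, a renewal-reward computation finishes the job. Under a policy that resets the age upon reaching $m$, consecutive push epochs form a renewal process with interarrival time $L_m = \sum_{i=1}^m T_i$, with $T_i \sim \mathrm{Exp}(\lambda_n)$ i.i.d., so $\mathbb{E}[L_m] = m/\lambda_n$. The expected per-cycle cost is $c_f + \beta p_n c_a \sum_{i=0}^{m-1} i/\lambda_n = c_f + \beta p_n c_a\, m(m-1)/(2\lambda_n)$, and the renewal reward theorem gives the long-run average cost $C^B_n(m) = \lambda_n c_f / m + \tfrac{1}{2}\beta p_n c_a (m-1)$, matching the formula in the statement.

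It remains to optimize over integer $m \geq 1$. The continuous extension of $C^B_n$ is strictly convex on $(0,\infty)$ with unique real minimizer $m^*_n = \sqrt{2\lambda_n c_f / (\beta p_n c_a)}$, so by convexity the integer minimizer lies in $\{\lfloor m^*_n \rfloor, \lfloor m^*_n \rfloor + 1\} = \{\eta_n, \eta_n+1\}$, giving precisely the $\eta_n^*$ in the theorem. Finally, when $m^*_n$ is an integer, direct substitution into $C^B_n$ collapses to $\sqrt{2\lambda_n \beta p_n c_a c_f} - \tfrac{1}{2}\beta p_n c_a$, and summing over $n$ yields the claimed expression for $C^{B^*}$.
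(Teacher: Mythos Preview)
Your proposal is correct and follows the same high-level strategy as the paper: decouple across items, establish that the single-item optimal policy is of threshold type via a vanishing-discount argument, and then use renewal--reward to compute $C^{B}_n(m)$ and optimize over the integer threshold. Your renewal computation and the convexity argument for locating the integer minimizer in $\{\eta_n,\eta_n+1\}$ are in fact more explicit than what the paper writes in its appendix.

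The one place where the paper's route differs is in how the threshold structure is nailed down. You sketch a direct continuous-time argument: monotonicity of the relative value function plus equicontinuity of the discounted value functions as the discount vanishes. The paper instead uniformizes the continuous-time problem at the Poisson jump times of $\Delta_n$, obtaining an equivalent discrete-time discounted MDP with discount $q=\lambda_n/(\alpha+\lambda_n)$; it then runs finite-horizon value iteration, proves by induction that each $u_m^*$ is a threshold rule, shows the thresholds are uniformly bounded by $\lfloor c_f/(\beta p_n c_a)\rfloor + 1$, and finally invokes a finite state/action average-cost theorem to pass from discounted to average cost. The paper's route is more concrete and sidesteps the equicontinuity/ACOE existence issues on an unbounded state space by reducing to a finite chain; your route is cleaner but leaves the ``main technical obstacle'' (your words) as a sketch. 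Either is a legitimate way to reach the same conclusion.
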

\begin{proof} 
We start the proof by first revealing that the optimal paradigm has a threshold based structure and then fully characterize the optimal threshold. For the full detailed proof, please refer to Appendix A. 
\end{proof}

According to Theorem \ref{thm:costpush}, the optimal push-based paradigm utilizes the knowledge of the exact age of the cached content to decide when to update the cache. Consequently, it decides to push the most fresh version to the cache, immediately after the age of the cached content reaches a threshold. Moreover, As Theorem \ref{thm:costpush} shows, the optimal push-based paradigm is a deterministic paradigm. In other words, independent of time, back-end database can choose whether to update the cache or not based on the current age of the cached content.

In the next section, we consider the average cost minimization problem \eqref{eq:costmin} from the perspective of the edge-cache. i.e.,  the local cache controls the cache updates.

\section{Pull-Based Edge Caching} 
\label{sec:PullBased}
In this section, we focus on the general problem \eqref{eq:costmin} when the front-end local cache is responsible for updating the cache. Since the requests arrive to the local cache, the edge-cache observes the exact times of the arriving requests. However, the local cache is not aware of the exact age of the cached items and it only knows the rates of the updates $\boldsymbol{\lambda}$. Moreover, define $s_n(t)=\{t-l : l= \max (t' \leq t : u_n(t' )=1)\}\geq 0, \forall n \in \cN$ to be the elapsed time since the last time item $n$ was updated in the cache. Thus, at any time $t$, $\lambda_n s_n(t)$ would be the expected age of the data item $n$ in the cache. Accordingly, the system state accessible for the edge-cache driven caching paradigm is $x_n(t)=(s_n(t), r_n(t)), \forall n\in \cN, t\geq 0$.
As a consequence, at any time $t$, the expected cost of the edge-cache driven caching paradigm for data item $n$ can be expressed as:
\begin{equation} 
\begin{split}
\mathbb{E}_{x}^{u}\left[C^E_n(s_n(t),r_n(t), u_n(t))\right]&=u_n(t) c_f \\&+ r_n(t) s_n(t) \lambda_n c_a (1-u_n(t)),
\end{split}
\end{equation}
where the first term captures the cost of fetching the freshest version from the database and the second term captures the freshness cost due to serving a potentially aged item from the cache. Thus, the cost minimization problem \eqref{eq:costmin} for the edge-cache driven caching paradigm can be expressed as:
\begin{equation}
\label{eq:costminpull} 
\begin{split}
     &\hspace{-.05in} \min _{\boldsymbol{u}} \lim _{\mathrm{T} \rightarrow \infty} \frac{1}{T} \int_{o}^{\mathrm{T}} \sum_{n=1}^N \left[ u_n(t) c_f + r_n(t) s_n(t) \lambda_n c_a (1-u_n(t))\right]dt\\
    &\hspace{-.05in}  s.t. \quad  s_n(t+dt)=s_n(t)(1-u_n(t))+dt, \quad \forall n \in \cN , t\geq 0,\\
    &\hspace{-.05in}  \hspace{.32in} u_n(t) \in \{0,1\}, \quad \forall n \in \cN , t\geq 0,\\
    &\hspace{-.05in}  \hspace{.32in} r_n(t) \in \{0,1\}, \quad \forall n \in \cN , t\geq 0,
\end{split}
\end{equation}
Next, we give the optimal edge-cache driven caching policy that solves problem \eqref{eq:costminpull} and call that the optimal policy for the Pull-Based Caching paradigm. 
\begin{theorem} \label{thm:costpull}
Policy $\boldsymbol{u}^*$ that solves \eqref{eq:costminpull} is given by:
\begin{equation}
    u^{*}_n(s_n,r_n)=\left\{\begin{array}{cc}
r_n & s_n>\tau_n^*, \\
0 & s_n \leq \tau_n^*,
\end{array}\right.
\end{equation}
where $\tau_n^*=\frac{1}{\beta p_{n}}\left(\sqrt{1+2 \frac{\beta p_{n} c_{f}}{c_{a} \lambda_{n}}}-1\right)$  is the optimal time threshold that the edge-cache will fetch the item from the database upon receiving a new request. Then, the corresponding optimal average cost is given by:
\begin{equation}
C^{E^*}(\boldsymbol{\lambda}, \beta, \mathbf{p})=\sum_{n=1}^N c_{a} \lambda_{n}\left(\sqrt{1+2 \frac{\beta p_{n}}{\lambda_{n}} \frac{c_{f}}{c_{a}}}-1\right).
\end{equation}

\end{theorem}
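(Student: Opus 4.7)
The problem decouples across items because the state $(s_n, r_n)$ and cost for item $n$ depend only on the thinned request process for $n$ (Poisson with rate $\beta p_n$, independent across items) and the independent update process for $n$. I therefore fix $n$, drop the subscript, and minimize the per-item time-average cost.

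Two preliminary reductions simplify the action space. First, it is without loss of optimality to fetch only at request-arrival instants, i.e.\ to take $u_n(t) = r_n(t)\cdot \mathbf{1}\{\text{fetch}\}$. Indeed, if a candidate policy sets $u_n(t_0)=1$ at a time $t_0$ with $r_n(t_0)=0$, shifting that fetch to the very next request epoch preserves the total fetch count and, since the integrand $r_n s_n (1-u_n)$ vanishes on the shifted interval (no request arrives there), cannot change the integrated aging cost. Second, stationary Markov policies whose decision at each request depends only on the current $s$-value are sufficient; this is standard for an average-cost semi-Markov decision process (SMDP) with bounded per-step cost.

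The core step is to show that the optimal stationary policy is a \emph{threshold} rule of the form $u^*(s,r) = r \cdot \mathbf{1}\{s>\tau^*\}$. I plan to use the Bellman equation for the average-cost SMDP whose decision epochs are request arrivals, state is $s$, sojourn time is $Y\sim \mathrm{Exp}(\beta p)$, and whose two actions have immediate costs $c_a \lambda s$ (do not fetch; next state $s+Y$) and $c_f$ (fetch; next state $Y$). Letting $h$ denote the relative value function and $\Delta(s)$ the advantage of not fetching over fetching, the fetch branch is $s$-independent while the no-fetch immediate cost $c_a \lambda s$ is strictly increasing in $s$, so $\Delta$ inherits the monotonicity of $h$, which can itself be shown non-decreasing by a direct interchange argument comparing two states $s_1<s_2$. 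Monotonicity of $\Delta$ then forces the optimal fetch set to be an upper interval $\{s>\tau^*\}$. This structural step is the main obstacle; an alternative route is to invoke standard monotone-threshold structural theorems for SMDPs with supermodular cost.

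Once threshold structure is established, computing $\tau^*$ is routine renewal-reward. For a threshold-$\tau$ policy, memorylessness gives an inter-fetch cycle of expected length $\tau + 1/(\beta p)$; only requests arriving in $[0,\tau]$ are served from cache, and by Campbell's formula their expected total aging cost is
\[
\int_0^\tau c_a \lambda s \cdot \beta p\, ds \;=\; \tfrac{1}{2}\, c_a \lambda \beta p\, \tau^2,
\]
to which I add the single fetch cost $c_f$ paid at the end of the cycle. The renewal-reward theorem therefore yields the per-item average cost
\[
f(\tau) \;=\; \frac{c_f + \tfrac{1}{2} c_a \lambda \beta p\, \tau^2}{\tau + 1/(\beta p)}.
\]
Setting $f'(\tau)=0$ reduces to a quadratic in $\tau$ whose unique positive root is the claimed $\tau_n^*$. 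Using the first-order condition one also obtains $f(\tau^*) = c_a \lambda \beta p\, \tau^*$, from which the stated per-item optimum $c_a\lambda\bigl(\sqrt{1+2\beta p\, c_f/(c_a\lambda)}-1\bigr)$ follows immediately; summing over $n$ gives $C^{E^*}$.
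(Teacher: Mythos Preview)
Your proposal is correct and reaches the same conclusion, but the route you take to establish threshold optimality differs from the paper's. The paper works through the \emph{vanishing discounted} problem: it discretizes at the jump times of $r_n$, runs value iteration to get finite-horizon value functions $v_m$, proves by induction that each $v_m$ is non-decreasing in $s$ (hence each stage-$m$ optimizer is a threshold), shows the thresholds are uniformly bounded by $c_f/(\lambda_n c_a)$, and then passes to the average-cost limit via monotone convergence together with the average-cost optimality inequality (citing the convex-analytic method and \cite{hernandez1993existence,arapostathis2021optimality}). You instead attack the average-cost SMDP directly, invoking the Bellman equation for the relative value function $h$ and arguing that the advantage $\Delta(s)$ is monotone because the no-fetch branch is increasing in $s$ while the fetch branch is constant. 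Your route is shorter and more transparent once one grants existence of a solution to the ACOE and sufficiency of stationary Markov policies, which you label ``standard''; the paper's vanishing-discount construction is precisely what furnishes those guarantees without appealing to outside SMDP structure theorems. Either approach is acceptable, but if you keep yours you should cite a concrete SMDP result (or sketch the interchange argument for $h$) rather than leave that step implicit.

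A second difference worth noting: the paper does not carry out the renewal-reward optimization for $\tau^*$ in this proof, deferring instead to \cite{abolhassani2021fresh}. Your explicit computation of $f(\tau)$, the first-order condition, and the identity $f(\tau^*)=c_a\lambda\beta p\,\tau^*$ is a genuine addition and makes the proof self-contained. One small tightening: in your first reduction, the reason the delayed-fetch policy dominates is not only that the aging integrand vanishes on the shifted interval, but also that the post-fetch state is reset to $s=0$ at the request epoch rather than to $s=t'-t_0>0$, so the modified policy is (weakly) better on the entire future as well; this is the same coupling the paper uses in its Proposition on jump-time sufficiency.
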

\begin{proof}
We start the proof by first revealing that the optimal policy has a threshold-based structure and then fully characterize the optimal threshold. For the full detailed proof, please refer to Appendix B. 
\end{proof}

According to Theorem \ref{thm:costpull}, the optimal pull-based policy utilizes the knowledge of the request arrivals and the time elapsed since the last update to decide when to fetch the most fresh version to the cache. Consequently, it decides to fetch the up-to-date version to the cache, the two criteria are met. First, the cached item $n$ must be in the cache for longer than a constant time $\tau_n$ without receiving any update. Secondly and more importantly, there must be a request for item $n$ to trigger a fetching and thus update the cache. In other words, cache updates are only performed when there is a request for an item that has been in the cache for longer than a set time. Moreover, as Theorem \ref{thm:costpull} shows, the optimal pull-based policy is a deterministic policy. In other words, independent of time, the edge-cache can decidedly choose whether to update the cache or not based on the current elapsed time and whether there is a request or not.

It turns out that depending on the popularity and refresh rate for each item, one of the push or pull-based caching paradigms can outperform the other. This is the idea behind the next section where we arrange the items into two groups based on their popularity and refresh rates and deploy one of these caching paradigms for each group. The result is a caching scheme that outperforms each one separately.

\section{Proposed Combined Caching} 
\label{sec:proposed}
In this section, we investigate the general problem defined in equation \eqref{eq:costmin}. We compare two caching paradigms, as described in Sections \ref{sec:PushBased} and \ref{sec:PullBased}. We demonstrate that, for each data item $n \in \cN$, there exists a threshold in the fraction $\frac{p_n}{\lambda_n}$ such that the push-based paradigm will be more effective than the pull-based paradigm. Based on this threshold, we can divide the items into two groups and apply the push-based caching paradigm for the first group and the pull-based paradigm for the second group.

In this section, we present the method for calculating the threshold, show how our proposed caching scheme outperforms previous paradigms, and compare the results with a genie-aided paradigm that has access to all system information. We first consider the scenario without buffer constraints to gain insight into the optimal combined policy. Then, we take into account buffer constraints and modify the proposed policy for the limited cache scenario.

\subsection{Caching without Buffer Constraint}
Here, we examine the scenario where the cache size is unlimited and there is no buffer constraint. The objective is to develop a combined policy that effectively integrates push-based and pull-based caching to reduce the average cost. First, we start by demonstrating the gain of the push-based compared to the pull-based caching paradigm and illustrate that for each item $n$, the gain only depends on the fraction $\frac{p_n}{\lambda_n}$. In Theorem \ref{thm:costpush}, by assuming that $\sqrt{2 \lambda_n c_{f} / \beta p_n c_{a}}$ is an integer, we can express the optimal push-based caching cost for item $n$ as:
\begin{equation}
    C^{B^*}_n=\sqrt{2 \lambda_n \beta p_n c_{a} c_{f}}-\frac{1}{2} \beta p_n c_{a}.
\end{equation}
On the other hand, according to Theorem 2, the optimal pull-based caching cost for item $n$ is given by:
\begin{equation}
    C^{E^*}_n=\lambda_n c_{a}\left(\sqrt{1+\frac{2 \beta p_n c_{f}}{\lambda_n c_{a}}}-1\right).
\end{equation}
We define the percentage cost reduction for data item $n$ as:
\begin{equation} \label{eq:reduction}
    Reduction (\%) = 100 \times \frac{C^{E^*}_n-C^{B^*}_n}{C^{B^*}_n},
\end{equation}
where it can be expressed as:
\begin{equation}
\label{eq:costreduction}
   Reduction (\%) = 100 \times \left(\frac{\sqrt{1+G F}-1}{\sqrt{G F}-F}-1\right),
\end{equation}
such that $F=\frac{\beta p }{2 \lambda}$ and $G=\frac{4c_f}{c_a}$. For the purpose of continuous optimization over $F$, the indices $n$ have been omitted in our analysis, as we focus solely on a single item.

This shows that for a given $G$ and $\beta$, the gain is only a function of the fraction $\frac{p}{\lambda}$. Moreover, it also shows that the threshold where the gain is zero only depends on the fraction $\frac{c_f}{c_a}$ and not on any other system parameter. The percentage cost reduction given in Equation \eqref{eq:costreduction} is depicted in Fig. \ref{fig:costreduction} for different values of $F$ and $G$. As the figure shows, for a given $G$, there is a threshold where the gain becomes zero, i.e., the two paradigms yield the same average cost. Define $f^*(G)=\{2F | Reduction (\%) =0 \}$ to be the fraction of popularity to refresh rate where the gain is zero for the given $G$. As the figure shows, for any data item $n$ and a given $G$, if $\frac{p_n}{\lambda_n}<f^*(G)/\beta$, then the pull-based caching outperforms the push-based caching. On the other hand, if $\frac{p_n}{\lambda_n}>f^*(G)/\beta$, then the push-based caching outperforms the pull-based caching. 
In the next proposition, we show that such $f^*(G)$ always exists and is finite.
\begin{proposition}
\label{prop:gain_threshold}
For any given $G=\frac{4c_f}{c_a}$, the zero gain threshold $f^*(G)$ exists and satisfies the following equation:
\begin{equation}
    1<f^*(G)< 2.
\end{equation}
\end{proposition}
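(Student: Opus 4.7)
The plan is to recast Reduction $(\%) = 0$ as a root-finding problem for a single continuous function of $F := f^*(G)/2$ and apply the Intermediate Value Theorem on the interval $[1/2, 1]$. Up to the common positive factor $\lambda_n c_a$, we have $C^{E*}_n = \sqrt{1+GF}-1$ and $C^{B*}_n = \sqrt{GF}-F$, so the zero-gain condition becomes $h(F^*)=0$, where
\[
h(F) \,:=\, \sqrt{1+GF}-\sqrt{GF}+F-1 \,=\, \frac{1}{\sqrt{1+GF}+\sqrt{GF}} - (1-F),
\]
the second form obtained by rationalization. The argument then reduces to two endpoint sign checks plus continuity.

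At $F=1$ the check is immediate: $h(1) = \sqrt{1+G}-\sqrt{G} = 1/(\sqrt{1+G}+\sqrt{G}) > 0$ for every $G>0$, which pins any root strictly below $1$ and gives $f^*(G)<2$. At $F=1/2$,
\[
h(1/2) \,=\, \frac{1}{\sqrt{1+G/2}+\sqrt{G/2}} - \frac{1}{2},
\]
so $h(1/2)<0$ is equivalent to $\sqrt{1+G/2}+\sqrt{G/2}>2$, a short radical inequality that I would verify by squaring (both sides positive) to reduce to a clean polynomial condition on $G$ on the operating regime. With both endpoint signs in hand, continuity of $h$ and the Intermediate Value Theorem deliver a root $F^*\in(1/2,1)$, that is, $1<f^*(G)<2$. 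Uniqueness of this nontrivial root is not needed for the statement but is immediate: $h''(F) = \frac{G^2}{4}\big[(GF)^{-3/2}-(1+GF)^{-3/2}\big] > 0$, so $h$ is strictly convex with $h(0)=0$ and $h'(0^+)=-\infty$, which forces exactly one additional zero.

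The main obstacle is the lower-bound endpoint check: the upper bound $h(1)>0$ is a one-line rationalization, whereas $h(1/2)<0$ requires disposing of the surds carefully — squaring must be justified by positivity of both sides — and one also has to confirm that $F=1/2$ sits inside the regime where the continuous push-based formula of Theorem~\ref{thm:costpush} is the operative optimum (the derivation of $C^{B*}_n$ implicitly needs $\sqrt{G/(4F)}\ge 1$, equivalently $G\ge 4F$). Once the endpoint inequality at $F=1/2$ is nailed down, existence via the Intermediate Value Theorem, the upper bound, and the uniqueness remark are essentially free.
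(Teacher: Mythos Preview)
Your proposal is correct and follows essentially the same route as the paper: both check endpoint signs at $F=1/2$ and $F=1$ and invoke the Intermediate Value Theorem, with the only cosmetic difference that the paper first clears radicals to obtain the cubic $F^3-4(1+G)F^2+4(1+2G)F-4G=0$ (evaluating to $1$ at $F=1$ and $9/8-G$ at $F=1/2$), whereas you work directly with the rationalized form of $h$. Your squaring step at $F=1/2$ reduces to exactly the same condition $G>9/8$ that the paper reads off the cubic and justifies via the practical assumption $c_f>c_a$; your added convexity/uniqueness remark is a small bonus not present in the paper.
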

\begin{proof}
Setting the cost reduction given in Equation \eqref{eq:reduction} to zero, we have:
\begin{equation*}
    F^3-4(1+G)F^2+4(1+2G)F-4G=0,
\end{equation*}
where it is a cubic equation that has at least one real root. For $F=1$, the left-hand side is equal to $1$ and is positive. For $F=1/2$, the left-hand side is equal to $9/8-G$ and is negative under any practical assumption where $c_f>c_a$. Therefore, there exists a $F^*$ such that $1/2<F^*<1$ that solves the above equation. According to definition  $F=\frac{\beta p_n }{2 \lambda _n}$ and $f^*(G)=\{f=\frac{ \beta p_n}{\lambda_n} | Reduction (\%) =0 \}$, there is a $1<f^*(G)< 2$ that results in zero percentage gain.
\end{proof}

Utilizing the results of the Proposition \ref{prop:gain_threshold}, the numerical results in Fig. \ref{fig:costreduction} confirm that for any data item $n$, if $\frac{\beta p_n}{\lambda_n} <1$, pull-based caching will always outperform the push-based caching. On the other hand, if $\frac{\beta p_n}{\lambda_n} >2$, push-based caching will always outperform the other, under any given system parameters.

Based on these findings, we propose a caching scheme that utilizes this threshold to arrange items into two different groups based on their popularity and refresh rates.

\begin{figure}
\centering
\includegraphics[width=0.5\textwidth]{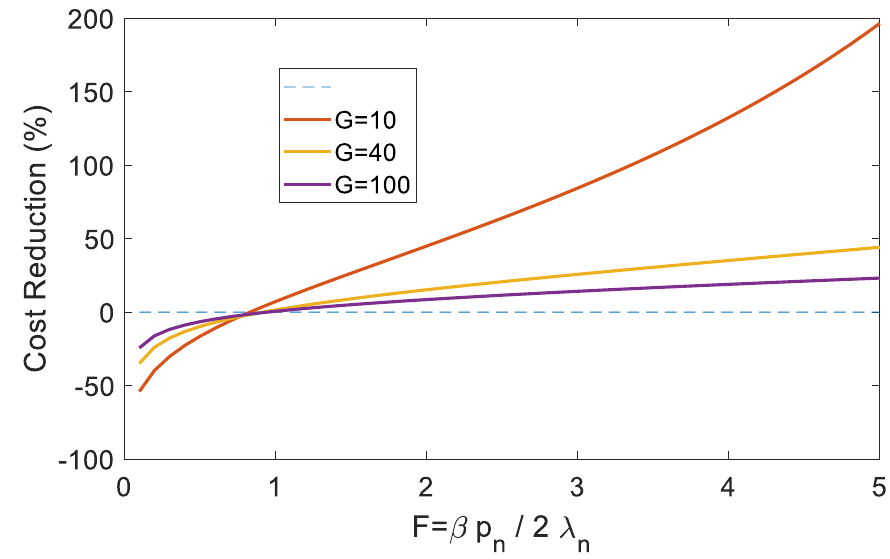}
\caption{Cost Reduction (\%) for Push-Pull based caching }
\label{fig:costreduction}
\end{figure}


\begin{theorem}
\label{thm:combinedparadigm}
In a system composed of a data set $\cN$ of $N$ items with popularity distribution $\mathbf{p}=(p_n)_{n=1}^N$ and update rates $\boldsymbol{\lambda}=(\lambda_n)_{n=1}^N$, assume without loss of generality that items are ordered such that $y_{1}^{*} \geq y_{2}^{*} \geq ... \geq y_{N}^{*}$ where $y_{n}^{*}$ is defined as $y_{n}^{*}=\frac{\beta p_n}{\lambda_n}$. Define $n^*=\max \{ n | y_n^* >f^*\}$ and arrange items in two groups where $\mathcal{G}_1=\{1,2,...,n^*\}$ and $\mathcal{G}_2=\cN/\mathcal{G}_1$. For the first group apply the optimal push-based caching policy (Theorem \ref{thm:costpush}) and let the back-end database control the cache updates. For the second group, utilize the optimal pull-based caching policy (Theorem \ref{thm:costpull}) and let the edge-cache control the updates. The result is a caching scheme with the corresponding average cost given by:
\begin{equation} \label{eq:costcombined} 
\begin{split}
    C^*= & \sum_{n=1}^{n^*} \left( \sqrt{2 \lambda_n \beta p_n c_{a} c_{f}}-\frac{1}{2} \beta p_n c_{a}\right) \\
    + & \sum_{n^*+1}^{N}
        \lambda_n c_{a}\left(\sqrt{1+\frac{2 \beta p_n c_{f}}{\lambda_n c_{a}}}-1\right).
\end{split}
\end{equation}
\end{theorem}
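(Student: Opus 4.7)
The plan is to reduce the theorem to Theorems~\ref{thm:costpush} and~\ref{thm:costpull} through an item-wise decomposition. First I would observe that the aggregate objective in \eqref{eq:costmin} is additive over items and that the per-item state $x_n(t)$, control $u_n(t)$, update indicator $b_n(t)$, and request indicator $r_n(t)$ evolve independently across distinct $n$ (the request process is a Poisson thinning, updates at different items are independent Poisson processes, and the fetch cost $c_f$ is incurred per item). In the absence of a buffer constraint, no coupling ties the $N$ control trajectories together, so \eqref{eq:costmin} decomposes into $N$ independent single-item subproblems. Crucially, since the paradigm merely selects which controller---back-end database or front-end cache---is empowered to act on item $n$, the paradigm assignment may also be made independently for each $n$.

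Given this decomposition, for every $n\in\mathcal{G}_1$ I would invoke Theorem~\ref{thm:costpush} to conclude that the push-based threshold policy with threshold $\eta_n^{*}$ attains $C_n^{B^*}=\sqrt{2\lambda_n\beta p_n c_a c_f}-\tfrac{1}{2}\beta p_n c_a$ (under the stated integrality assumption). Likewise, for every $n\in\mathcal{G}_2$, Theorem~\ref{thm:costpull} implies that the pull-based threshold policy with threshold $\tau_n^{*}$ attains $C_n^{E^*}=\lambda_n c_a\bigl(\sqrt{1+2\beta p_n c_f/(\lambda_n c_a)}-1\bigr)$. Summing over the two disjoint groups yields
\[
C^{*} \;=\; \sum_{n\in\mathcal{G}_1} C_n^{B^*} \;+\; \sum_{n\in\mathcal{G}_2} C_n^{E^*},
\]
which is exactly \eqref{eq:costcombined}.

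To justify that $n^{*}=\max\{n:y_n^{*}>f^{*}\}$ is the correct partition index, I would appeal to the reduction analysis preceding the theorem: expression \eqref{eq:costreduction} shows that the sign of $C_n^{E^*}-C_n^{B^*}$ depends only on whether $y_n^{*}$ exceeds $f^{*}(G)$ with $G=4c_f/c_a$, and Proposition~\ref{prop:gain_threshold} guarantees such an $f^{*}$ exists in $(1,2)$. Under the ordering $y_1^{*}\geq\cdots\geq y_N^{*}$, every item in $\mathcal{G}_1$ therefore satisfies $C_n^{B^*}\le C_n^{E^*}$ and every item in $\mathcal{G}_2$ satisfies $C_n^{E^*}\le C_n^{B^*}$, so the combined scheme realizes $\sum_n\min\bigl(C_n^{B^*},C_n^{E^*}\bigr)$ and is at least as good as either pure paradigm in isolation.

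The main obstacle, I expect, is formalizing the decoupling: one must verify that the information required by the back-end controller for items in $\mathcal{G}_1$ (the exact versions $\Delta_n$ for $n\in\mathcal{G}_1$) and the information required by the edge-cache controller for items in $\mathcal{G}_2$ (the request arrival times for items in $\mathcal{G}_2$) can coexist without interference, and that each controller has access to the full optimal information structure for its assigned items. Once this is made explicit, the theorem follows by direct substitution from Theorems~\ref{thm:costpush} and~\ref{thm:costpull}.
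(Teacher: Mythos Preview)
Your proposal is correct and follows essentially the same route as the paper's proof: decouple the objective item-by-item, invoke Proposition~\ref{prop:gain_threshold} to determine on which side of the threshold $f^*$ each item falls, assign push or pull accordingly, and then sum the per-item costs from Theorems~\ref{thm:costpush} and~\ref{thm:costpull} to obtain \eqref{eq:costcombined}. Your write-up is in fact more explicit than the paper's about the independence argument and the information-structure caveat, but the underlying logic is identical.
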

\begin{proof}
As we have discussed so far, for the problem of edge caching in the presence of dynamic content, the cache needs to be updated regularly. The update process can either be controlled by the front-end edge-cache or by the back-end database, resulting in pull-based and push-based caching strategies, respectively. For each scenario, we explicitly characterized the optimal policy in Theorems \ref{thm:costpush} and \ref{thm:costpull}. According to Proposition \ref{prop:gain_threshold}, there $\exists$ a threshold $f^*$ for $\frac{\beta p_n}{\lambda_n}$ that both policies will have identical performance, resulting in zero percentage gain. Moreover, if $\frac{\beta p_n}{\lambda_n}>f^*$, push-based caching performs better and if $\frac{\beta p_n}{\lambda_n}<f^*$, pull-based caching performs better than the other. Therefore, we divide items into two groups based on the fraction $\frac{\beta p_n}{\lambda_n}$ and apply the superior paradigm to each group of items. The result is a new combined scheme that outperforms each paradigm separately. Finally, according to Theorems \ref{thm:costpush} and \ref{thm:costpull}, the average cost can be expressed as in Equation \eqref{eq:costcombined}.
\end{proof}

Such caching scheme first decides which of the push or pull-based caching strategies should be deployed for each item, dividing items into two disjoint groups, and then utilizing the optimal push or pull-based edge caching for those groups of items. In other words, for each item, the new combined scheme chooses one of the push or pull-based caching paradigms that outperforms the other. Therefore, it outperforms each one if deployed exclusively. 

Next, to better demonstrate the potential of the proposed combined scheme, we consider a genie-aided paradigm based on full access to system states. Suppose there is a cache update controller unit that is fully aware of the exact time of request arrivals and also exactly knows the age of the cached items at all times, i.e., both age aware and request aware. Also, assume this information is available to the controller at all times at no extra cost. Accordingly, the system state accessible for such genie-aided paradigm is $x_n(t)=(\Delta_n(t), r_n(t)), \forall n\in \cN, t\geq 0$.
The result is a hypothetical paradigm that will reveal a lower bound on the minimum achievable average cost. In other words, there is no other paradigm that can achieve any cost less than this genie-aided paradigm. We aim to compare the proposed optimal paradigms to this impractical genie-aided paradigm.

\begin{proposition} \label{prop:genie-aided} (Hypothetical genie-aided paradigm) Consider a hypothetical genie-aided paradigm that is both age and request aware. In other words, it is not only fully aware of the exact version of the contents in the back-end database, but it also knows the exact times of the request arrivals to the edge-cache. Also, assume this information is available to the controller at all times at no extra cost. Under this assumption, the optimal policy is given by:
\begin{equation} \label{eq:combinedoptimal}
    u^{*}_n(\Delta_n,r_n)=\left\{\begin{array}{cc}
r_n & \Delta_n>\eta_n \\
0 & \Delta_n \leq \eta_n,
\end{array}\right.
\end{equation}
where $\eta_n=\underset{m \in \{0,1,2,...\} }{\operatorname{argmin}} \hspace{.04in} C_n(m)$ such that $C_n(m)=\frac{0.5\beta p_n c_am(m-1)+\lambda_n c_f}{\lambda_n/\beta p_n+m}$. Thus, $C^{*}=\sum_{n=1}^{N}  C_n(\eta_n^*)$ is the optimal cost under this hypothetical genie-aided paradigm.
\end{proposition}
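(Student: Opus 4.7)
The approach is to (i) decouple the multi-item problem into $N$ scalar subproblems, (ii) establish two structural properties of the per-item optimum, (iii) evaluate the cost of an arbitrary threshold policy by a renewal-reward calculation, and (iv) optimize over the threshold. Because the cost functional in \eqref{eq:costmin} and the per-item dynamics both decompose additively in $n$, the genie-aided problem splits into $N$ independent subproblems; fix one item $n$ and drop the subscript throughout.

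The first structural claim is that a spontaneous fetch (one performed at a non-request instant) is never strictly better than deferring it to the next request. Consider any policy $\pi$ that fetches at a non-request time $t_0$ with age $k>0$, and let $T_1$ be the next request epoch. Construct $\pi'$ by skipping the $t_0$-fetch and instead fetching at $T_1$, and couple the two sample paths along the common Poisson realizations. During $[t_0,T_1)$ no aging cost is paid under either policy. At $T_1$, if $\pi$ also fetches, then both policies pay $c_f$ at $T_1$, both reach age $0$ afterwards, and $\pi'$ is cheaper by $c_f$. If $\pi$ does not fetch at $T_1$, then $\pi$ pays $c_aN$ and continues from age $N$, while $\pi'$ pays $c_f$ and continues from age $0$, where $N$ is the number of database updates in $(t_0,T_1]$; the pathwise cost difference is $V(0)-V(N)-c_aN\le 0$, using monotonicity of the value function $V$ in age, which follows from a standard coupling showing that a higher current age weakly increases future cost. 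Hence attention may be restricted to policies that fetch only at request instants.

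The second structural claim is that, within this reduced class, the optimal per-request rule is of threshold form $u_n^\ast(\Delta,r)=r\cdot\mathbf{1}\{\Delta>\eta_n\}$. At a request with age $\Delta$ the genie compares $c_a\Delta+V(\Delta)$ against $c_f+V(0)$; monotonicity of $V$ makes the set of $\Delta$ for which updating is optimal upward-closed. It remains to evaluate the average cost of a threshold-$\eta$ policy by the renewal-reward theorem. Call a cycle the interval between two consecutive fetches, starting at age $0$. The age spends an $\mathrm{Exp}(\lambda_n)$ sojourn in each of the states $0,1,\ldots,\eta$ (mean $1/\lambda_n$), after which it enters the triggering region and waits an $\mathrm{Exp}(\beta p_n)$ time (mean $1/(\beta p_n)$) for the fetch-inducing request. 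The expected aging cost accrued during the sojourn at age $k$ equals $c_ak\cdot\beta p_n/\lambda_n$ (expected Poisson requests in an $\mathrm{Exp}(\lambda_n)$ window times the per-request aging cost), summing over $k$ to $c_a\beta p_n\eta(\eta+1)/(2\lambda_n)$; adding the terminal $c_f$ and dividing by the expected cycle length $(\eta+1)/\lambda_n+1/(\beta p_n)$, then clearing denominators, yields the stated $C_n(m)$ at the reparameterization $m=\eta+1$.

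The final step is the scalar minimization of $C_n(m)$ over $m\in\{0,1,2,\ldots\}$, which identifies $\eta_n$; summation across items produces $C^\ast$. I expect the main obstacle to be the sample-path exchange underlying the first structural claim: the naive pairwise comparison between fetching at $t_0$ and deferring to $T_1$ depends on the subsequent action taken at $T_1$ and on the post-$T_1$ age trajectories, which must be controlled by monotonicity of the value function of the average-cost MDP. Once that is secured, the monotone-MDP argument for the threshold form and the renewal-reward calculation are routine.
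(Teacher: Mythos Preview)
Your proposal is correct and follows essentially the same route the paper sketches (its proof simply defers to the techniques of Appendices~A and~B): decouple across items, restrict to request-triggered fetches via an exchange argument, establish the threshold form through monotonicity of the value function, and evaluate the per-threshold cost by renewal--reward. Your direct appeal to the average-cost Bellman equation in place of the paper's vanishing-discount/value-iteration machinery is a minor variation, and your renewal computation with the reparameterization $m=\eta+1$ correctly recovers the stated $C_n(m)$.
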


\begin{proof}
Since such a paradigm is both age and request aware, the state would be $x_n(t)=(\Delta_n(t)),r_n(t))$. Under the hypothetical assumption that this information is available to the controller for free at all times, the expected cost can be expressed as:
\begin{equation} 
\begin{split}
    \mathbb{E}_{x}^{u}\left[C_n(\Delta_n(t),r_n(t),u_n(t))\right]&=u_n(t) c_f \\&+ r_n(t)  c_a \Delta_n(t)(1-u_n(t)).
    \end{split}
\end{equation}

The cost minimization problem for such a paradigm would be to solve Equation \eqref{eq:costmin} under assumptions $\Delta_n(t+dt) =\Delta_n(t) (1-u_n(t))+b_n(t)$ and $ r_n(t) \in \{0,1\}, \quad \forall n \in \cN , t\geq 0$. Utilizing a similar approach we took in Appendices A and B will yield the optimal paradigm as in \eqref{eq:combinedoptimal}. Replacing the optimal age threshold in $C_n(m)$ and summing over all items would result in the optimal average cost.
\end{proof}

As we can see in Equation \eqref{eq:combinedoptimal}, such a hypothetical genie-aided paradigm will decide to update the cache when the age is above a threshold and there is a new request. Since we assumed that age and request information are available to the controller at all times with no extra cost, the cost under such a paradigm gives a lower bound on the average cost.

We would like to highlight that such a hypothetical genie-aided policy, although conceptually intriguing, is not practically viable. This is due to the large number of factors that the base station would need to manage. In the context of this genie-based approach, the base station's responsibilities would extend beyond understanding update patterns. It would necessitate knowledge of the specific content stored within each individual edge cache, the precise ages of this content, and the exact timings of requests made to each edge cache. Effectively keeping track of all these factors becomes increasingly unfeasible, particularly when dealing with a large number of edge caches. For this reason, we exclusively utilize the genie-aided policy as a benchmark policy against which we evaluate the performance of our proposed policies.

Next, we compare the performance of proposed paradigms to the case of the hypothetical genie-aided paradigm defined in Proposition \ref{prop:genie-aided} using numerical simulations. We consider the simulation parameters to be $\beta=5$ for the average total request rate and the normalized fetching and aging costs to be $c_f=1$ and $c_a=0.1$ respectively. We assume that the database consists of $N=10^3$ items. Moreover, assume the item popularities follow a Zipf distribution with parameter $z$, i.e., $p_n=\frac{p_0}{n^z}, \forall n \in \cN$ and refresh rate is the same for all the items, i.e., $\lambda_n=\lambda, \forall n \in \cN$.

\begin{figure}[t]
\centering
\includegraphics[width=0.5\textwidth]{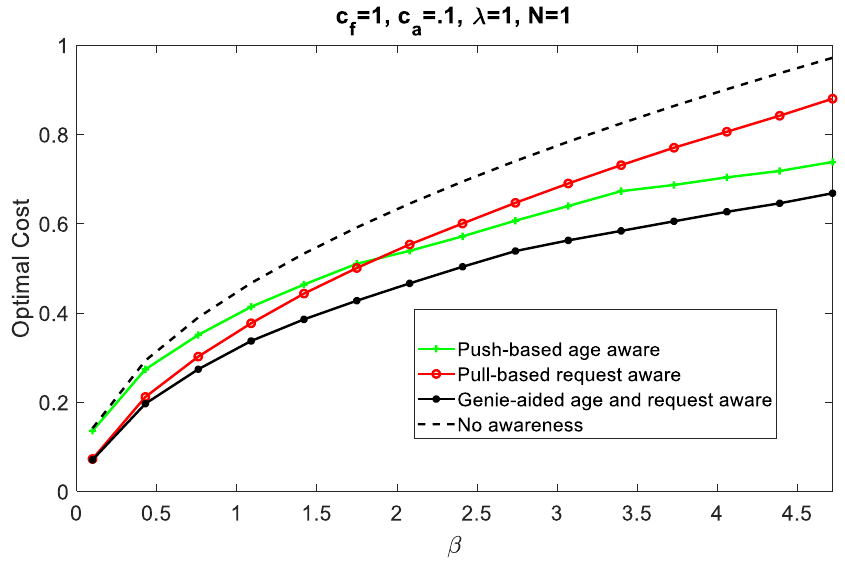}
\caption{ Performance Comparison of the Proposed paradigms for Single Item.}
\label{fig:comparison1}
\end{figure}
Fig.~\ref{fig:comparison1} shows the performance of paradigms for the single item scenario as a function of request arrival rate $\beta$. According to the figure, the pull-based caching paradigm that is only request aware outperforms the push-based caching paradigm (only age aware) when $\beta$ is small. However, as $\beta$ increases, the tide changes, and the push-based caching paradigm will outperform the pull-based paradigm. Moreover, we see there is an amount for $\beta$ where two paradigms have identical performance, resulting in zero percentage gain.

\begin{figure}[t]
\centering
\includegraphics[width=0.5\textwidth]{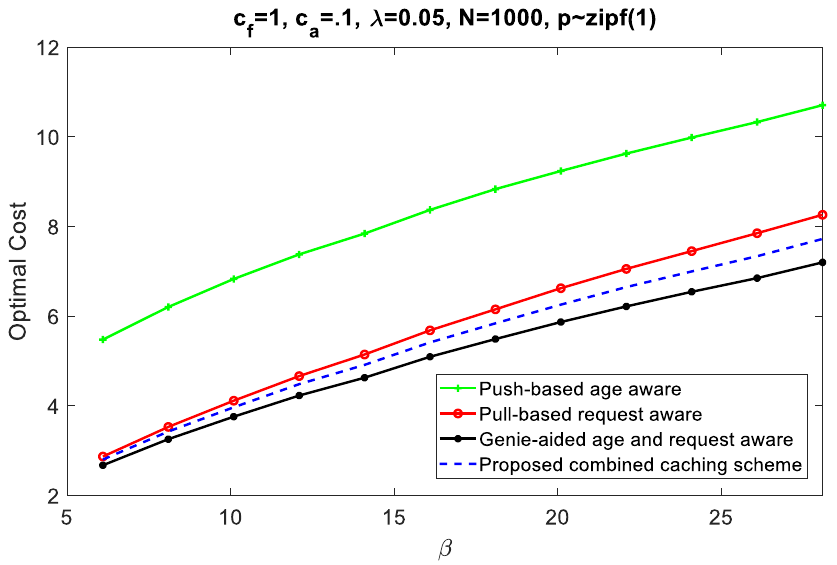}
\caption{Performance Comparison of the Proposed paradigms for Multi-item $N=1000$. }
\label{fig:comparison}
\end{figure}
The results shown in Fig.~\ref{fig:comparison} highlight the performance of the three caching paradigms in a multi-item scenario with $z=1$. The graph indicates that the pull-based caching paradigm, which only considers request information, outperforms the push-based caching paradigm for a range of $\beta$. Furthermore, the proposed combined caching strategy, which smartly switches between push- and pull-based caching for different items, delivers better results than either of the individual paradigms. This supports our expectation that the combined scheme would provide superior performance. However, as $\beta$ increases, the difference between the combined scheme and the individual paradigms increases. The graph also highlights the benefits of the combined scheme, as it takes advantage of the available information effectively, resulting in a cost close to that of the genie-aided paradigm that has complete information.

So far, we have proposed the combined caching policy assuming unlimited cache capacity. In reality, cache space limitations are often present. But given the dynamic nature of the content, these limitations are unlikely to be restrictive. As long as the cache space is sufficient for the dynamic content, the proposed combined policy can be applied without considering the cache capacity constraint. However, there may be scenarios where the cache constraint is active. In the next section, we extend the proposed combined caching policy to handle the case with a limited cache capacity, taking into account the buffer constraint.

\subsection{Caching with Buffer Constraint}
In this section, our focus is to minimize the cost stated in problem \eqref{eq:costmin}, given that the cache capacity is set to $B$. We assume that once an item is stored in the cache, it will remain there without being evicted. We then demonstrate how the proposed combined policy in Theorem \ref{thm:combinedparadigm} can be modified to be applied to the caching scenario with a buffer constraint.

According to Theorem \ref{thm:combinedparadigm}, for each data item $n \in \cN$, given its popularity and refresh rate, there exists a threshold on the fraction $\frac{p_n}{\lambda_n}$ where the push-based paradigm outperforms the pull-based paradigm. Furthermore, Theorem \ref{thm:combinedparadigm} showed that items can be sorted based on the decreasing order of the fraction $y_{n}^{*}=\frac{\beta p_n}{\lambda_n}$ and then divided into two separate groups based on the established threshold. The push-based caching paradigm is then applied to the first group and the pull-based caching paradigm to the second group. In this section, we demonstrate how such a policy can be generalized to the scenario of limited buffer size. The next proposition presents the optimal combined caching policy for the limited buffer case.

\begin{proposition}
\label{prop:bufferconstraint}
In a system composed of a data set $\cN$ of $N$ items with popularity distribution $\mathbf{p}=(p_n)_{n=1}^N$ and update rates $\boldsymbol{\lambda}=(\lambda_n)_{n=1}^N$ and a total cache capacity $B$, assume without loss of generality that items are ordered such that $y_{1}^{*} \geq y_{2}^{*} \geq ... \geq y_{N}^{*}$, where $y_{n}^{*}=\frac{\beta p_n}{\lambda_n}$. The optimal policy is given by:

\begin{equation}
\label{eq:bufferconstraint}
 \left\{\begin{array}{cc}
\mathcal{G}_1=\{1,2,...,n^*\}, \mathcal{G}_2=\{n^*+1,...,B\}  & n^* < B, \\
\hspace{-.7in} \mathcal{G}_1=\{1,2,...,B\}, \mathcal{G}_2=\{ \emptyset \}  & n^* \geq B,
\end{array}\right.
\end{equation}
where $n^*=\max \{ n | y_n^* >f^*\}$, $\mathcal{G}_1$ and $\mathcal{G}_2$ are the sets of items placed in the edge cache, and items in $\mathcal{G}_1$ are updated through a push-based paradigm, while items in $\mathcal{G}_2$ are updated through a pull-based paradigm. The average cost under this policy is given by:
\begin{equation} \label{eq:bufferconstraintcost}
\begin{split}
    C^*= & \sum_{n=1}^{ \min \{n^*,B\}  } \left( \sqrt{2 \lambda_n \beta p_n c_{a} c_{f}}-\frac{1}{2} \beta p_n c_{a}\right) \\
    + & \sum_{n^*+1}^{B}
        \lambda_n c_{a}\left(\sqrt{1+\frac{2 \beta p_n c_{f}}{\lambda_n c_{a}}}-1\right) 1\{ n^* < B \}.
\end{split}
\end{equation}
\end{proposition}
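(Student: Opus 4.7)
The plan is to decompose Proposition \ref{prop:bufferconstraint} into two subproblems and then stitch them together: (A) given that item $n$ is admitted to the cache, pick its update paradigm optimally; and (B) under the cardinality constraint $|\mathcal{G}_1 \cup \mathcal{G}_2| \leq B$, decide which items to admit. Subproblem (A) is resolved directly by Theorem \ref{thm:combinedparadigm} together with Proposition \ref{prop:gain_threshold}: push-based dominates at the single-item level iff $y_n^* > f^*$, yielding per-item optimal cost $C_n^{\min} = \min(C_n^{B*}, C_n^{E*})$ with the closed forms from Theorems \ref{thm:costpush} and \ref{thm:costpull}. Because the objective in \eqref{eq:costmin} is additively separable over items and the buffer constraint only limits cardinality (the items are unit-sized by Section \ref{sec:Sys_Model}), (A) and (B) decouple, so the policy for each admitted item is exactly the one from Theorem \ref{thm:combinedparadigm}.

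For subproblem (B), I would define the per-item caching savings $S_n = C_n^{\text{uncached}} - C_n^{\min}$, where $C_n^{\text{uncached}}$ is the baseline cost of serving every request for item $n$ directly from the back-end, and then cast admission as maximizing $\sum_{n \in \mathcal{G}} S_n$ subject to $|\mathcal{G}|\leq B$. The structural claim needed is that $S_n$ is monotonically increasing in $y_n^* = \beta p_n / \lambda_n$, piecewise on the two sides of the crossover $y_n^* = f^*$, with continuity at $f^*$ guaranteed by the defining property of $f^*$ in Proposition \ref{prop:gain_threshold}. Verifying this reduces to differentiating $\sqrt{2 \lambda_n \beta p_n c_a c_f} - \tfrac{1}{2} \beta p_n c_a$ on the push side and $\lambda_n c_a(\sqrt{1 + 2 \beta p_n c_f/(\lambda_n c_a)}-1)$ on the pull side against the relevant parameter, and confirming that a larger $y_n^*$ simultaneously raises the aging cost avoided by caching and reduces the relative update overhead. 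Granted monotonicity, the greedy rule of admitting the top $B$ items ordered by $y_n^*$ is optimal by a standard exchange argument, which is exactly the selection encoded by the set definitions in \eqref{eq:bufferconstraint}.

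Assembly is then bookkeeping. When $n^* < B$, the top $n^*$ admitted items all satisfy $y_n^* > f^*$ and land in $\mathcal{G}_1$ under push-based updates, while ranks $n^*+1$ through $B$ have $y_n^* \leq f^*$ and land in $\mathcal{G}_2$ under pull-based updates; when $n^* \geq B$, every admitted item satisfies $y_n^* > f^*$, so $\mathcal{G}_2 = \emptyset$. Summing $C_n^{B*}$ over $\mathcal{G}_1$ and $C_n^{E*}$ over $\mathcal{G}_2$ yields \eqref{eq:bufferconstraintcost}, with the indicator $\mathbf{1}\{n^* < B\}$ suppressing the second sum in the fully push-managed regime. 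The hard part I anticipate is the monotonicity of $S_n$ in $y_n^*$ when two items share a common ratio but differ in the absolute scale of $\beta p_n$ and $\lambda_n$; the closed forms above depend on these magnitudes separately, so strict monotonicity in $y_n^*$ alone requires either an explicit calculation that the residual scale dependence preserves the ordering, or an auxiliary partial-order swap argument holding one of $\beta p_n, \lambda_n$ fixed. If that finer claim resists a clean proof, the statement is still recoverable as the best policy within the natural class that sorts by $y_n^*$, and the cost expression \eqref{eq:bufferconstraintcost} remains valid on the stated partition.
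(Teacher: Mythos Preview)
Your approach is essentially the same as the paper's: decouple into (A) paradigm choice for admitted items via the threshold $f^*$ from Proposition~\ref{prop:gain_threshold} and Theorem~\ref{thm:combinedparadigm}, and (B) greedy admission of the top-$B$ items ranked by $y_n^*=\beta p_n/\lambda_n$, then sum the per-item costs from Theorems~\ref{thm:costpush} and~\ref{thm:costpull} to obtain~\eqref{eq:bufferconstraintcost}. The paper's proof is in fact considerably less detailed than yours: it simply asserts that ``caching items with higher popularity and lower refresh rates will result in higher cost reductions,'' and from this concludes that filling the cache in order of $y_n^*$ is optimal, without the savings formalization or exchange argument you outline.

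The concern you flag at the end is legitimate and is \emph{not} resolved in the paper either. Both $C_n^{B^*}$ and $C_n^{E^*}$ depend on $\beta p_n$ and $\lambda_n$ separately, not only through their ratio, so two items with identical $y_n^*$ but different scales can have different caching savings; strict monotonicity of $S_n$ in $y_n^*$ alone therefore does not hold in general. The paper does not confront this and effectively treats the result as optimal within the class of policies that rank by $y_n^*$, which is the same fallback you propose. So your proof sketch is at least as rigorous as the paper's, and your identification of the gap is accurate; you should not expect to close it cleanly, because the paper does not close it either.
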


\begin{proof}
As we showed in Theorem \ref{thm:combinedparadigm}, the optimal policy without buffer constraint will divide the items into two disjoint groups based on the threshold obtained in Proposition \ref{prop:gain_threshold}. According to this proposition, there $\exists$ a threshold $f^*$ for $\frac{\beta p_n}{\lambda_n}$ that both push-based and pull-based policies will have identical performance, resulting in zero percentage gain. Moreover, if $\frac{\beta p_n}{\lambda_n}>f^*$, push-based caching performs better and if $\frac{\beta p_n}{\lambda_n}<f^*$, pull-based caching performs better than the other. Caching items with higher popularity and lower refresh rates will result in higher cost reductions. Since items are sorted according to $y_{n}^{*}=\frac{\beta p_n}{\lambda_n}$, therefore, the optimal policy will start filling the cache with the highest $y_{n}^{*}$ and will continue to do that until the cache is full. Thus, the set of cached items under the optimal policy is always $\{1,2,..., B\}$. After having the set of cached items and being equipped with knowledge of the optimal threshold $f^*$, we divide the set of cached items into two groups $\mathcal{G}_1$ and $\mathcal{G}_2$. The cached items of the group $\mathcal{G}_1$ are updated through a push-based paradigm, and the cached items of the group $\mathcal{G}_2$ are updated through a pull-based paradigm. Finally, according to Theorems \ref{thm:costpush} and \ref{thm:costpull}, the average cost can be expressed as in Equation \eqref{eq:bufferconstraintcost}.
\end{proof}

Proposition~\ref{prop:bufferconstraint} presents the proposed combined caching policy for handling the buffer constraint scenario. First, it fills the cache, and then, based on the optimal threshold $f^*$ for $\frac{\beta p_n}{\lambda_n}$, it determines whether to use the pull-based or push-based paradigm for the cached items.

To evaluate the performance of the proposed paradigms for the limited buffer size case, numerical simulations were performed and compared to the hypothetical genie-aided paradigm outlined in Proposition~\ref{prop:genie-aided}. The same parameters as in the previous section were used, and the buffer size was assumed to be $B=10$. We also assume that refresh rates $\lambda_n, \forall n$ are weighted according to Zipf with parameter $\alpha$ (i.e., $\lambda_n \propto \frac{1}{n^\alpha}$) where their average is considered to be $\lambda_{avg}=0.01.$

\begin{figure}[t]
\centering
\includegraphics[width=0.5\textwidth]{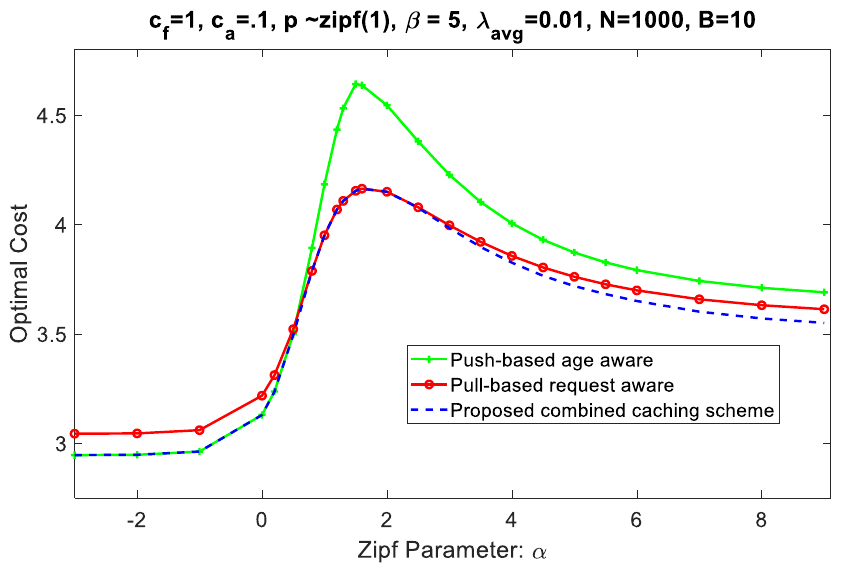}
\caption{Performance Comparison of the Proposed paradigms Under Limited Cache Capacity. }
\label{fig:comparison_limited_Zipf}
\end{figure}

Figure~\ref{fig:comparison_limited_Zipf} presents the performance of push- and pull-based caching paradigms with a limited cache capacity of $B=10$, as a function of the refresh Zipf parameter $\alpha$, for a Zipf(1) popularity distribution. When $\alpha>0$, highly popular items will have higher refresh rates, while for $\alpha<0$, items with higher popularity will have lower refresh rates. A value of $\alpha=0$ represents constant refresh rates for all items. As shown in the figure, when highly popular items have higher refresh rates (i.e., $\alpha > 0$), the pull-based caching paradigm, which is request-aware, outperforms the push-based paradigm, which is only age-aware. Conversely, when popular items have lower refresh rates, the push-based caching paradigm outperforms the pull-based paradigm. The proposed combined caching scheme, which dynamically switches between push- and pull-based approaches for different items, outperforms both paradigms at all levels of refresh rates represented by the Zipf parameter $\alpha$.

\begin{figure}[t]
\centering
\includegraphics[width=0.5\textwidth]{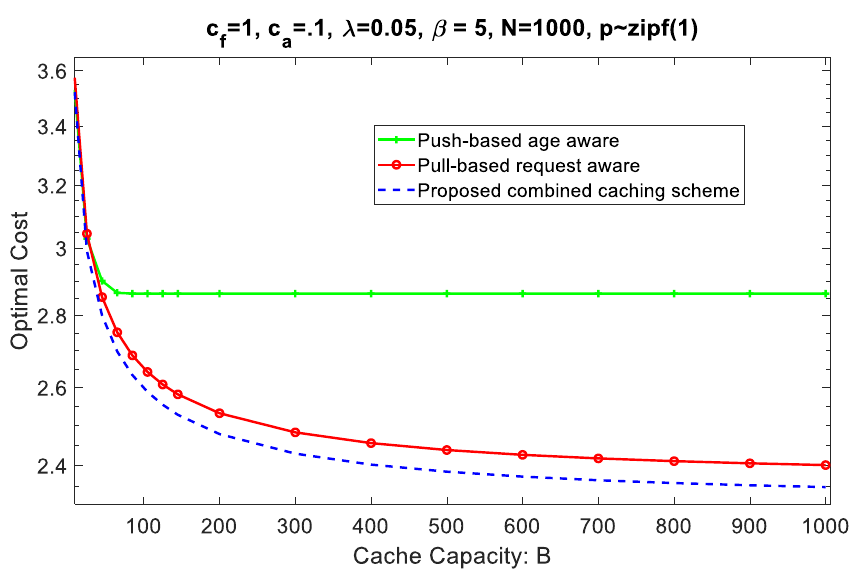}
\caption{Performance Comparison of the Proposed paradigms Under Limited Cache Capacity. }
\label{fig:comparison_limited_large}
\end{figure}

Figure~\ref{fig:comparison_limited_large} displays the performance of push- and pull-based caching paradigms as a function of increasing cache space, $B$. As the figure indicates, when the cache size is ample (e.g. $B>25$), pull-based caching surpasses push-based caching. It is also notable that push-based caching fails to take advantage of excess cache space, resulting in underutilization. On the other hand, pull-based caching consistently leverages additional cache space to minimize costs. However, as the figure highlights, the cost reduction from increasing cache space diminishes as the cache size becomes even larger (e.g. $B>N/10$). The proposed combined caching scheme not only outperforms both push- and pull-based approaches at all cache sizes, but also approaches optimal cost efficiency when the cache size is substantial (e.g. $B \approx 100$). This highlights the advantage of the proposed policy in effectively utilizing available cache space to achieve lower costs.

As demonstrated by Figures \ref{fig:comparison_limited_Zipf} and \ref{fig:comparison_limited_large}, the benefits of push-based caching are greatest when the cache size is limited and demand is more predictable. Conversely, pull-based caching is more effective in utilizing extra cache space and minimizing costs when the cache size is ample. The proposed combined caching policy is expected to outperform both push- and pull-based approaches in all scenarios

\section{Conclusion}
\label{sec:Conclusion}
In this work, we presented an important caching scenario for serving content that changes dynamically. We used the age-of-version metric to evaluate the freshness of served content and measure the number of stale versions per item. Our focus was on developing optimal caching strategies that minimize the system cost, which is influenced by the cost of fetching fresh content from a back-end database and the cost of aging content stored in a front-end cache. We examined the caching problem from two perspectives: the back-end database-driven caching paradigm, where the database controls cache updates, and the edge-cache-driven caching paradigm, where the local cache controls updates. We proposed a push-based caching policy for the former and characterized its optimality, while fully identifying the optimal caching policy for the latter. By comparing the performance of both paradigms, we determined the circumstances where each outperforms the other. Our new caching scheme, which combines the two strategies, outperforms either one individually and achieves the minimum cost among the discussed paradigms. Numerical results support the superiority of the proposed combined scheme over each of the individual paradigms.

\appendix
\subsection{Proof of Theorem 1:}
Since for each data item $n \in \cN$, the average cost is independent of all the other items, we can move the summation to the outside and write the optimization problem as:
\begin{equation} 
\begin{split}
    &\sum_{n=1}^N \min _{\boldsymbol{u_n}} \lim _{\mathrm{T} \rightarrow \infty} \frac{1}{T} \int_{o}^{\mathrm{T}}  \left[ u_n(t) c_f + \beta p_n  c_a  \Delta_n(t)\right]dt.
\end{split}
\end{equation}
Therefore, by decoupling the optimization over items, we can focus on optimizing the cost for each data item $n$, separately. We take the vanishing discounted cost approach to the average cost problem. Thus, consider:
\begin{equation} \label{eq:costpushvanish}
\begin{split}
    &\min _{\boldsymbol{u_n}} \lim _{\mathrm{T} \rightarrow \infty}  \mathbb{E}_{x}^{u} \int_{o}^{\mathrm{T}} e^{-\alpha t} C(\Delta_n(t),u_n(t))dt\\
    & s.t. \quad  u_n(t) \in \{0,1\}, \quad \forall t\geq 0,\\
    & \hspace{.32in}  \Delta_n(t+dt) =\Delta_n(t) (1-u_n(t))+b_n(t), \quad \forall t\geq 0.
\end{split}
\end{equation}
Next, based on the approach taken in \cite{rosberg1982optimal}, we characterize the equivalent discrete-time problem for the vanishing cost problem defined in \eqref{eq:costpushvanish}. For simplicity of notation, we use $C(\Delta_t)=C(\Delta_n(t),u_n(t))=u_n(t) c_f + \beta p_n  c_a  \Delta_n(t)$, where $\Delta_t=\Delta_n(t)$. Given a control policy $\boldsymbol{u}$ and an initial state $\Delta_0$, a semi-Markov decision process $(\Delta_t: t\geq0)$ with jump rates $\lambda_n$ is given by:
\begin{equation*}
    \Delta(t)=\Delta_{m(t)},
\end{equation*}
where $\boldsymbol{\Delta}=(\Delta_0,\Delta_1,...)$ is a sequence of random variables and $(\Delta(t):t\geq0)$ is a rate $\lambda_n$ Poisson process which is independent of $\boldsymbol{\Delta}$. Let $\tau_m$ denote the random times at which $(m(t))$ jumps for the $m$th time. Then, the cost for a control policy $\boldsymbol{u}$ over the random time interval $[0,\tau_m)$ is:
\begin{equation*} 
\begin{split}
   & \mathbb{E}_{x}^{u}\int_{o}^{\tau_m} e^{-\alpha t} C(\Delta_t)dt= \mathbb{E}_{x}^{u}\sum_{k=0}^{m-1} \int_{\tau_k}^{\tau_{k+1}}e^{-\alpha t}C(\Delta_t)dt\\
   =& \mathbb{E}_{x}^{u}\sum_{k=0}^{m-1} \left[C(\Delta_{t_k})\mathbb{E}\int_{\tau_k}^{\tau_{k+1}}e^{-\alpha t}dt\right]\\
   =&\mathbb{E}_{x}^{u}\sum_{k=0}^{m-1}C(\Delta_{t_k}) \frac{1}{\alpha} \mathbb{E} \left[e^{-\alpha t_k}-e^{-\alpha t_{k+1}}\right],
   \end{split}
\end{equation*}
where $t_k\leq \tau_m$ is the time of the $k$th jump. Since $t_{k+1}-t_k$'s are i.i.d. with $P\{t_{k+1}-t_k>t\}=e^{-t\lambda_n}$, therefore $\mathbb{E} e^{-\alpha t_k}=q^k$ where
\begin{equation*} 
    q=\mathbb{E} e^{-\alpha t_1}=\int_{0}^\infty e^{-\alpha t}\lambda_n e^{-\lambda_n t} dt=\frac{\lambda_n}{\alpha + \lambda_n}.
\end{equation*}
Substituting, we see that the cost equals:
\begin{equation*} 
 \mathbb{E}_{x}^{u}\int_{o}^{\tau_m} e^{-\alpha t} C(\Delta_t)dt=  \frac{1-q}{\alpha}  \mathbb{E}_{x}^{u} \sum_{k=0}^{m-1} q^k C(\Delta_{t_k}),
\end{equation*}
provided that $q<1$, whereas if $q=1$, then it equals $\frac{1}{\lambda_n}\mathbb{E}_{x}^{u} \sum_{k=0}^{m-1} q^k C(\Delta_{t_k})$. We will ignore the constant factor and take the cost to be:
\begin{equation*} 
 \mathbb{E}_{x}^{u} \sum_{k=0}^{m-1} q^k C(\Delta_{t_k}),
\end{equation*}
which is valid for $0\leq q\leq1$. Writing $\Delta_k=\Delta_{t_k},$ we see that the cost can also be viewed as the cost over $m$ time steps for a discrete-time decision process with discount factor $q$. Therefore, we focus on optimizing the discrete-time vanishing discounted cost problem stated as below: 
\begin{equation} \label{eq:pushdiscount} 
\begin{split}
    &\min _{\boldsymbol{u}} \lim _{m \rightarrow \infty}  \mathbb{E}_{x}^{u} \sum_{k=0}^{m-1} q^k C(\Delta_{t_k}),\\
\end{split}
\end{equation}
where $0<q<1$ is the discount factor. Since the cost function is non-negative, according to the value iteration algorithm, we define:
\begin{equation*} 
    v_{m}(x)=\min _{u_{n}}\left\{c(x, u)+q \sum_{y} v_{m-1}(y) \mathrm{p}(y \mid x, u)\right\},
\end{equation*}
for $\forall x, m \geq 1,$ where $x=\Delta$ is the state of the system and $u \in \{0,1\}$ is the action taken. Also, $v_0(x)=0, \forall x=\Delta \in \{0,1,2,...\}$. Such $\{ v_m \}_m$ is a monotonically non-decreasing sequence.
\begin{proposition} \label{prop:um}
There exists a threshold $\eta_m$ where the optimal action $u_m^*(\Delta)$ that minimizes $v_m(x), \forall n$ can be expressed as:
\begin{equation} \label{eq:hypothesis} 
u_{m}^{*}(\Delta)= \begin{cases}1 & \Delta>\eta_{m}, \\ 0 & \Delta \leq \eta_{m},\end{cases}
\end{equation}
\end{proposition}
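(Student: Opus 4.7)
The plan is to prove Proposition \ref{prop:um} by induction on $m$, with the inductive invariant being that $v_m(\Delta)$ is non-decreasing in $\Delta$. Once this monotonicity is in place, the threshold structure of $u_m^*$ drops out of a direct comparison between the two action costs in the Bellman equation.

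First I would make the Bellman recursion concrete. In the embedded discrete-time chain sampled at update epochs, the transition from state $\Delta$ is deterministic: if $u=0$ the state moves to $\Delta+1$ at the next jump (one more version has been produced at the database), while if $u=1$ the cache is refreshed and the state resets to a fixed value (conventionally $0$). Writing $c(\Delta,u) = u c_f + \beta p_n c_a \Delta$, the recursion specializes to
\begin{equation*}
v_m(\Delta) = \beta p_n c_a \Delta + \min\bigl\{\, q\, v_{m-1}(\Delta+1),\ c_f + q\, v_{m-1}(0)\,\bigr\},
\end{equation*}
with $v_0 \equiv 0$, which is trivially non-decreasing.

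For the inductive step, assume $v_{m-1}$ is non-decreasing. The affine term $\beta p_n c_a \Delta$ is increasing in $\Delta$; inside the minimum, the first argument $q v_{m-1}(\Delta+1)$ is non-decreasing by the hypothesis, while the second is constant in $\Delta$. Since the pointwise minimum of a non-decreasing function and a constant is itself non-decreasing, $v_m$ inherits the invariant. The threshold property then follows because action $u=1$ is preferred to $u=0$ at state $\Delta$ exactly when
\begin{equation*}
q\,\bigl[v_{m-1}(\Delta+1) - v_{m-1}(0)\bigr] > c_f,
\end{equation*}
and the left-hand side is non-decreasing in $\Delta$. Hence the set on which $u=1$ is optimal is upward-closed, giving the stated form with
\begin{equation*}
\eta_m := \sup\bigl\{\Delta \geq 0 : q\,[v_{m-1}(\Delta+1) - v_{m-1}(0)] \leq c_f\bigr\},
\end{equation*}
with the convention $\eta_m = \infty$ (and $u_m^* \equiv 0$) when the bracket never exceeds $c_f$.

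I expect the monotonicity induction itself to be routine; the only care needed is consistent tie-breaking in the $\min$ so that $u_m^*(\Delta)$ is a well-defined map (for instance, resolving ties in favor of $u=0$, which matches the $\Delta \leq \eta_m$ branch in the statement). The substantive obstacle for the broader development lies downstream of this proposition, in passing from the vanishing-discount discrete-time problem back to the long-run average continuous-time cost and then extracting the explicit closed-form threshold $\eta_n^*$ in Theorem \ref{thm:costpush}; the present proposition only supplies the structural fact that the optimal policy is threshold-type at every stage of value iteration.
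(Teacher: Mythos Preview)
Your proof is correct and follows essentially the same approach as the paper: induction on $m$ with the non-decreasing property of $v_{m-1}$ as the invariant, from which the upward-closed structure of the set $\{\Delta: u=1 \text{ optimal}\}$ is immediate. One cosmetic discrepancy worth fixing: the paper's embedded recursion is $v_m(\Delta)=\min\{\beta p_n c_a \Delta + q\, v_{m-1}(\Delta+1),\ c_f + q\, v_{m-1}(1)\}$ --- after an update the age is reset \emph{before} the interval's aging cost accrues (so the $u=1$ branch carries no $\beta p_n c_a \Delta$ term) and the reset state is $1$, not $0$, since one new version arrives before the next decision epoch; consequently the crossing condition is $\beta p_n c_a \Delta + q[v_{m-1}(\Delta+1)-v_{m-1}(1)]>c_f$. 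This leaves your monotonicity argument untouched but will matter when you compute the explicit threshold for Theorem~\ref{thm:costpush}.
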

where $\eta_m$ is the age threshold at step $m$.

\begin{proof}(Proposition \ref{prop:um})
For $m=1$, we have $\eta_1= \left\lfloor \frac{c_f}{\beta p_n c_a} \right\rfloor$. Thus, $v_1(\Delta)=\beta p_n c_a \Delta$  for $\Delta\leq \eta_1$ and $v_1(\Delta)=c_f$ for $\Delta > \eta_1$. By induction, assume that the hypothesis holds for $m-1$, we show that it also holds for $m$. We have:
\begin{equation*}
\begin{split}
    &v_{m}(\Delta)= \min   \{ \\& \underbrace{ \beta p_n c_a \Delta +q\left[v_{m-1}(\Delta+1)\right]}_{u_{m}=0}, \underbrace{c_f+q\left[v_{m-1}(1)\right]}_{u_{m}=1}\},
    \end{split}
\end{equation*}
where the first term is the projected cost when there is no cache update, i.e., $u_m=0$, and the second term is the projected cost of instances with cache update. Moreover, we know $v_{m-1}(\Delta)$ is a non-decreasing function of $\Delta$. Therefore, if for a given $\Delta$, $u_m^*(\Delta)=1$, then $\forall \Delta'>\Delta$ we have $u_m^*(\Delta')=1$. If for a given $\Delta$, $u_m^*(\Delta)=0$, then for $\forall \Delta'<\Delta$, we have $u_m^*(\Delta')=0$. Thus, there exists a threshold $\eta_m$ where Equation \eqref{eq:hypothesis} holds.\end{proof}

Based on Proposition \ref{prop:um} and according to the value iteration algorithm, the optimal policy that solves \eqref{eq:pushdiscount} has a threshold structure given in Equation \eqref{eq:hypothesis}. Next, we show that the thresholds $\eta_m, \forall m$ are uniformly bounded.
\begin{lemma}
The age thresholds $\eta_m, \forall m$ given in Equation \eqref{eq:pushdiscount} are uniformly bounded. In other words, there exists $\eta$ such that:
\begin{equation*}
    \eta_m\leq \eta , \quad \forall m
\end{equation*}
\end{lemma}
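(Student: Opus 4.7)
The plan is to bound $\eta_m$ uniformly by combining two facts: (i) monotonicity of the value functions $v_m(\cdot)$ in the age $\Delta$, and (ii) the one-step Bellman inequality at the threshold itself. Because the ``update'' branch of the Bellman recursion is a constant in $\Delta$ while the ``no-update'' branch grows linearly in $\Delta$ plus a monotone continuation term, the threshold $\eta_m$ will be controlled by the ratio $c_f/(\beta p_n c_a)$ regardless of $m$.

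First I would verify, by induction on $m$, that $v_m(\Delta)$ is non-decreasing in $\Delta$. The base case $v_0\equiv 0$ is immediate. For the inductive step, assuming $v_{m-1}(\cdot)$ is non-decreasing, the first branch $\beta p_n c_a \Delta + q\, v_{m-1}(\Delta+1)$ of $v_m(\Delta)$ is a sum of two non-decreasing functions of $\Delta$, while the second branch $c_f + q\, v_{m-1}(1)$ is constant in $\Delta$. The pointwise minimum of a non-decreasing function and a constant is non-decreasing, so the claim carries over to $v_m$.

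Next, by Proposition \ref{prop:um}, at $\Delta = \eta_m$ the optimal action is $u_m^*(\eta_m)=0$, so the ``no-update'' branch is no worse than the ``update'' branch:
\begin{equation*}
\beta p_n c_a \eta_m + q\, v_{m-1}(\eta_m + 1) \;\leq\; c_f + q\, v_{m-1}(1).
\end{equation*}
By the monotonicity just established, $v_{m-1}(\eta_m+1) \geq v_{m-1}(1)$, so the two $q$-terms can be dropped from the inequality, yielding
\begin{equation*}
\beta p_n c_a \eta_m \;\leq\; c_f,
\qquad\text{i.e.,}\qquad \eta_m \;\leq\; \frac{c_f}{\beta p_n c_a}.
\end{equation*}
Since thresholds are integer-valued, setting $\eta := \bigl\lfloor c_f/(\beta p_n c_a)\bigr\rfloor$ produces a finite bound independent of both $m$ and the discount factor $q\in(0,1)$.

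I do not expect any real obstacle here: the monotonicity induction is routine, and once it is in place the threshold bound is a one-line consequence of the Bellman comparison. The only mild subtlety is respecting the convention that $u_m^*(\eta_m)=0$ (so the inequality goes in the direction that eliminates, rather than introduces, the continuation terms). The fact that the bound is $q$-independent is what will matter later, since this lemma will be invoked when passing from the vanishing-discount problem to the average-cost formulation.
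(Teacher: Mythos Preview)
Your proposal is correct and follows essentially the same approach as the paper: both use monotonicity of $v_{m-1}(\cdot)$ to conclude $q\,v_{m-1}(\eta_m+1)\ge q\,v_{m-1}(1)$, then read off $\eta_m\le c_f/(\beta p_n c_a)$ from the Bellman comparison at the threshold. Your write-up is in fact a bit more explicit, since you spell out the induction for monotonicity that the paper simply invokes; the only cosmetic difference is that the paper takes $\eta=\lfloor c_f/(\beta p_n c_a)\rfloor+1$ rather than $\lfloor c_f/(\beta p_n c_a)\rfloor$, but either choice works.
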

\begin{proof}
According to the value iteration algorithm:
\begin{equation*}
\begin{split}
    &v_{m}(\Delta)= \min   \{ \\& \underbrace{ \beta p_n c_a \Delta +q\left[v_{m-1}(\Delta+1)\right]}_{u_{m}=0}, \underbrace{c_f+q\left[v_{m-1}(1)\right]}_{u_{m}=1}\},
    \end{split}
\end{equation*}
since $v_{m-1}(\Delta), \forall m$ is a non-decreasing function of $\Delta$, so $q\left[v_{m-1}(\Delta+1)\right]\geq q\left[v_{m-1}(1)\right]$ holds for any $q \geq 0$ and $\Delta \geq 0$. Thus, independent of $q$, we have $\eta_m\leq \frac{c_f}{\beta p_n c_a}, \forall m.$ Consider $\eta=\left\lfloor \frac{c_f}{\beta p_n c_a} \right\rfloor+1$ and this completes the proof that the age thresholds are uniformly bounded.
\end{proof}

So far we showed that the policy that solves the vanishing discounted cost has a threshold structure. Then, based on the results of Lemma 1, we now argue that there exists a solution to the average cost minimization problem and this solution admits an optimal control of threshold type.

Consider the following controlled Markov chain with state space $\mathbb{X}=\{0,1,2,...,\eta,x_0\}$ and action space $\mathbb{U}=\{0,1\}$. Both state space and action space are finite. 

\begin{figure}[h]

\centering
\includegraphics[width=0.5\textwidth]{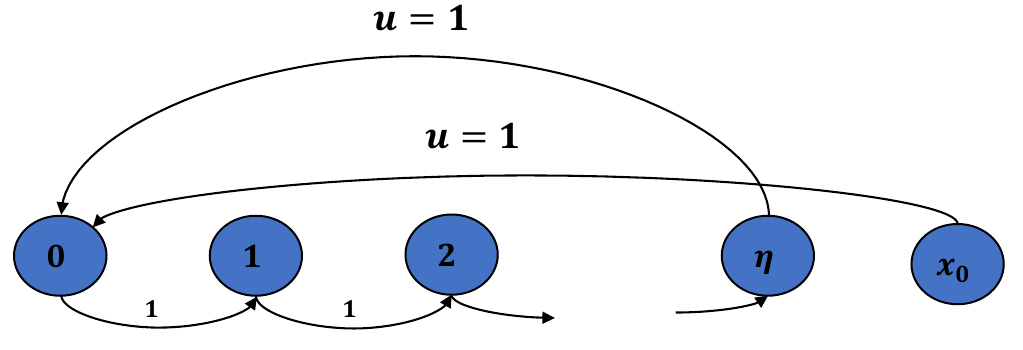}
\caption{ Markov chain diagram of age under the threshold policy with age threshold $\eta$ and initial state $x_0$.}
\label{fig:age MC}
\end{figure}

According to Proposition \ref{prop:um}, the optimal action $u_m^*(\Delta)$ that minimizes $v_m(x)$ has a threshold structure with age threshold $\eta_m <\eta$. Under such policy, the state space evolves similarly to what has been shown in Fig. \ref{fig:age MC}. The only difference is, since $\eta_m <\eta$, some states will not be visited. Therefore, at each step $m$, the state space would be a subset of the state space shown in Fig. \ref{fig:age MC} and thus it has a finite size. Since the state and action spaces are finite for any $q\in (0,1)$ and under the deterministic policy presented in Proposition \ref{prop:um}, the entire state space is a recurrent set as it is shown in Fig. \ref{fig:age MC}, then according to \cite[Theorem 7.3.1]{WinNT}, there exist a threshold policy which is optimal for the vanishing discounted cost when $q$ is sufficiently close to $1$, and such a policy is also optimal for the average cost.

And finally using the fact that the structure of the optimal policy is threshold based and utilizing the renewal process, we can find the optimal age threshold for each item $n$ by minimizing the cost function $C^{B}_n(m)$ over integer numbers. Substituting the optimal threshold in $C^{B}_n(m)$ and adding over all the items gives the optimal cost. And this completes the proof.

\subsection{Proof of Theorem 2:}
Similar to the proof of Theorem 1, since for each data item $n \in \cN$, the average cost is independent of all the other items, we can move the summation to the outside and write the optimization problem as:

\begin{equation} \label{eq:pullcontinous}
\begin{split}
    &\hspace{-.14in}  \sum_{n=1}^N \min _{\boldsymbol{u_n}} \lim _{\mathrm{T} \rightarrow \infty} \frac{1}{T} \int_{o}^{\mathrm{T}}  \left[u_n(t) c_f + r_n(t) s_n(t) \lambda_n c_a (1-u_n(t))\right]dt,
\end{split}
\end{equation}
Therefore, by decoupling the optimization over items, we can focus on optimizing the cost for each data item $n$, separately. More explicitly, we take the vanishing discounted cost approach to the average cost problem. Thus, consider:
\begin{equation} \label{eq:costpullvanish}
\begin{split}
    &\min _{\boldsymbol{u_n}} \lim _{\mathrm{T} \rightarrow \infty}  \mathbb{E}_{x}^{u} \int_{o}^{\mathrm{T}} e^{-\alpha t} C(x_n(t),u_n(t))dt\\
    & s.t. \quad  u_n(t) \in \{0,1\}, \quad \forall t\geq 0,\\
    & \hspace{.32in}  x_n(t)=(s_n(t),r_n(t)), r_n(t)\in \{0,1\}, s_n(t)\geq 0,\\
    & \hspace{.32in}  s_n(t+dt)=s_n(t)(1-u_n(t))+dt, \quad \forall n \in \cN , t\geq 0.
\end{split}
\end{equation}
In the next proposition, we show that by focusing  on the decision only at the jump times of $r$, the average cost cannot increase.

\begin{proposition} \label{prop:pulljump}
Assume $\boldsymbol{u_n}$ is an arbitrary policy. By changing the policy $\boldsymbol{u_n}$ such that each action $u_n(t)=1$ will wait until the next request, i.e., $r_n(t)=1$, we come up with a new policy $\boldsymbol{u'_n}$ which is a modified version of $\boldsymbol{u_n}$. Such policy $\boldsymbol{u'_n}$ will not perform worse than $\boldsymbol{u_n}$ for the average cost problem \eqref{eq:pullcontinous}.
\end{proposition}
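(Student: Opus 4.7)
The plan is a pathwise coupling argument. Fix any admissible policy $\boldsymbol{u_n}$, condition on a realization of the request process $r_n(\cdot)$, and enumerate the fetch epochs of $\boldsymbol{u_n}$ as $f_1<f_2<\cdots$. Define $g_i:=\inf\{t\geq f_i:r_n(t)=1\}$, the first request instant at or after $f_i$, and let $\boldsymbol{u'_n}$ fetch exactly at the distinct values of $\{g_i\}$. The transformation is causal---the edge cache merely holds each scheduled fetch $f_i$ until it observes the next request---so $\boldsymbol{u'_n}$ is still an admissible pull-based policy. Moreover, the number of distinct $g_i$'s is no larger than the number of $f_i$'s, so the pathwise fetching cost of $\boldsymbol{u'_n}$ is no larger than that of $\boldsymbol{u_n}$.

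It then remains to compare aging costs, which accrue only at request epochs via the factor $r_n(t)$. The key claim is that at every request epoch $t_k$ that is not itself a fetch epoch of $\boldsymbol{u'_n}$, one has $s^{u'}_n(t_k)\leq s^{u}_n(t_k)$. Indeed, let $f_j$ be the most recent fetch of $\boldsymbol{u_n}$ strictly before $t_k$ (or the initialization epoch, if none). Because $t_k$ is itself a request at time $\geq f_j$, the construction forces $g_j\leq t_k$, and the most recent $\boldsymbol{u'_n}$-fetch prior to $t_k$ is some $g_{j'}$ with $g_{j'}\geq g_j\geq f_j$; hence $s^{u'}_n(t_k)=t_k-g_{j'}\leq t_k-f_j=s^{u}_n(t_k)$. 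Combined with the fetching-cost bound, this yields a pathwise cost inequality over every finite horizon $[0,T]$; taking expectation and then sending $T\to\infty$ preserves the inequality for the long-run average-cost objective in \eqref{eq:pullcontinous}.

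The main technical subtlety is handling the ``boundary'' request epochs that coincide with fetch epochs of $\boldsymbol{u'_n}$, where $s^{u'}_n$ can temporarily exceed $s^{u}_n$. However, at such epochs $u'_n=1$, so the aging integrand is multiplied by $(1-u'_n)=0$ and drops out of the comparison---which is precisely why the inequality above is restricted to \emph{non-fetch} request epochs. A short induction on the fetch index, carrying as invariant that the last $\boldsymbol{u'_n}$-fetch epoch never precedes the last $\boldsymbol{u_n}$-fetch epoch, formalizes the dominance; a secondary check is that merging of several $f_i$ into a single $g_j$ only reduces fetching cost further and leaves the aging-cost inequality intact.
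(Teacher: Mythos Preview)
Your argument is correct and shares the paper's core idea---postponing every fetch of $\boldsymbol{u_n}$ until the next request can only help---but the execution differs. The paper gives a \emph{local, one-step} modification: it fixes a single fetch time $t$ with $r_n(t)=0$, moves it to the next request time $t'$, and compares costs only on $[t,t']$, claiming the two policies are ``identical outside $[t,t']$.'' That last claim is slightly loose (the states $s_n$ differ after $t'$ until the next reset), though the conclusion survives because the modified policy's elapsed time is dominated thereafter. Your \emph{global pathwise coupling}---defining all $g_i$ at once and proving $s^{u'}_n(t_k)\le s^u_n(t_k)$ at every non-fetch request epoch---avoids that looseness entirely, and it cleanly absorbs the edge cases the paper does not mention: the possibility that several $f_i$ collapse to a single $g_j$ (strictly reducing fetch count) and the boundary request epochs where $u'_n=1$ kills the aging term. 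The price is a short induction to maintain the invariant ``last $\boldsymbol{u'_n}$-fetch $\ge$ last $\boldsymbol{u_n}$-fetch,'' which you correctly identify; the paper's iterated one-step argument is shorter to state but leaves more to the reader.
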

\begin{proof} 
Consider policy $\boldsymbol{u}$, where there exists a time $t$ such that $r(t)=0$ and $u(t)=1$. In other words, $\boldsymbol{u}$ updates the cache when there is no request. Let $C^{\boldsymbol{u}}$ be the corresponding cost. Let $t'=\min \left\{\tau>t | r(\tau)=1 \right\}$ be the first arriving request after time $t$. We construct a new policy $\boldsymbol{u'}$ by setting $u'(\tau)=0$ for $t \leq \tau<t'$ and $u'(t')=1$. Now, we show that this modification does not increase the average cost of Equation \eqref{eq:pullcontinous}. Since the two policies are identical outside the interval $[t,t']$, we focus on this time period. Policy $\boldsymbol{u}$ updates the cache at time $t$ with cost $c_f$. When the next request arrives at $t'$, the cached content might be old, incurring an average freshness cost of $c_a\lambda_n (t'-t)$. Thus, $C^{\boldsymbol{u}}([t,t'])=c_f+c_a\lambda_n (t'-t)$. In contrast, policy $\boldsymbol{u'}$ updates the cache when there is a request, eliminating potential freshness cost. Thus, $C^{\boldsymbol{u'}}([t,t'])=c_f$. Therefore, by this slight modification to $\boldsymbol{u}$, the cost is reduced.
\end{proof}

If we keep applying this policy to every update instance of policy $\boldsymbol{u}$, ensuring cache updates only happen at times of request arrivals, the result is a policy that updates the cache only at time instances when there is a request. And such a policy will not perform worse than the original policy $\boldsymbol{u}$. Therefore, according to proposition \ref{prop:pulljump} and by taking the same approach of Appendix A, we see that the cost can also be viewed as the cost over $m$ time steps for a discrete-time decision process with discount factor $q$ where the time steps are the jump times of $r_n$ which is a rate $\beta p_n$ Poisson process. Thus, we focus on optimizing the discrete-time vanishing discounted cost problem stated below: 
\begin{equation} \label{eq:pulldiscount} 
\begin{split}
    &\min _{\boldsymbol{u}} \lim _{m \rightarrow \infty}  \mathbb{E}_{x}^{u} \sum_{k=0}^{m-1} q^k C(x_{t_k}),\\
\end{split}
\end{equation}
where $0<q<1$ is the discount factor and $x_{t_k}=(s_{t_k},r_{t_k})$ is the state at time step 
$k$. Since the cost function is non-negative, according to the value iteration algorithm, we define:
\begin{equation*} 
    v_{m}(x)=\min _{u_{n}}\left\{c(x, u)+q \sum_{y} v_{m-1}(y) \mathrm{p}(y \mid x, u)\right\}, \forall x, m,
\end{equation*}
where $x=(s,r)$ is the state of the system and $u \in \{0,1\}$ is the action taken. Also, $v_0(x)=0$ for all $x$. Such $v_m$ is a monotonically non-decreasing sequence.
\begin{lemma}
$v_{m}(s, r), \forall m \geq 1, r \in\{0,1\}$ is a non-decreasing function of $s$.
\end{lemma}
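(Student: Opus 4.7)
The plan is to proceed by induction on $m$, exploiting the monotone structure of both the one-step cost and the transition dynamics in the value iteration recursion.

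Base case ($m=1$): Since $v_0 \equiv 0$, the recursion collapses to $v_1(s,r) = \min_{u \in \{0,1\}}\{u c_f + r s \lambda_n c_a (1-u)\}$. For $r = 0$ this evaluates to $\min\{0, c_f\} = 0$, trivially non-decreasing in $s$; for $r = 1$ it equals $\min\{s \lambda_n c_a,\, c_f\}$, the pointwise minimum of a non-decreasing affine function of $s$ and a constant, which is itself non-decreasing in $s$.

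Inductive step: Suppose $v_{m-1}(\cdot, r)$ is non-decreasing for each $r \in \{0,1\}$. By the Bellman recursion,
\begin{equation*}
v_m(s, r) = \min_{u \in \{0,1\}} \Big\{ u c_f + r s \lambda_n c_a (1-u) + q\, \mathbb{E}\big[v_{m-1}(s', r') \mid s, r, u\big] \Big\}.
\end{equation*}
I would dissect the two action branches. For $u = 1$, the cache update resets the elapsed-time counter, and by Proposition~3 the next decision epoch is a request arrival reached after an $\mathrm{Exp}(\beta p_n)$ inter-arrival $T$; hence $s' = T$ is independent of the current $s$, making the $u=1$ branch constant in $s$. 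For $u = 0$, no update occurs and $s' = s + T$ with $T$ still independent of the current state, so by the induction hypothesis $\mathbb{E}[v_{m-1}(s + T, 1)]$ is non-decreasing in $s$; combined with the one-step cost $r s \lambda_n c_a$, which is itself non-decreasing in $s$ for each $r \in \{0,1\}$, the $u=0$ branch is non-decreasing in $s$ as well. Taking the pointwise minimum of a non-decreasing function of $s$ and a constant preserves monotonicity, which closes the induction.

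The main subtlety is not in the monotone-preservation bookkeeping but in justifying that the transition for $s$ under $u = 0$ shifts by an amount independent of the current state. This is precisely what the Proposition~3 reduction to request-jump epochs buys us: inter-epoch times are exponential with a state-independent rate, so the integrand $v_{m-1}(s + T, 1)$ inherits its $s$-monotonicity from the induction hypothesis under expectation. Without that independence, the monotonicity could in principle fail to propagate, so invoking the jump-epoch reduction carefully is the one non-routine ingredient.
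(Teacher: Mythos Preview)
Your proposal is correct and follows essentially the same inductive argument as the paper: verify the base case explicitly, then show that the $u=1$ branch of the Bellman recursion is constant in $s$ while the $u=0$ branch is non-decreasing by the induction hypothesis, and conclude since the pointwise minimum of non-decreasing functions is non-decreasing. Your handling of the transition $s' = s + T$ with exponential inter-arrival $T$ is if anything more careful than the paper's shorthand $s' = s+1$, but the logical structure is identical.
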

\begin{proof}(Lemma 1): We use induction. According to the definition of $v_m(s,r)$, we can show that:
\begin{equation*} 
    v_{1}(s, r)= \begin{cases}r c_{f} & s>c_{f} / \lambda_n c_{a}, \\ \lambda_n c_{a} r s & s \leq c_{f} / \lambda_n c_{a},\end{cases}
\end{equation*}
which is a non-decreasing function of $s$. Assume $v_{m-1}(s,r)$ is non-decreasing over $s$, then we have:
\begin{equation*} 
\begin{split}
    v_{m}(s, r)=\min & \{\lambda_n c_{a} s r+q\beta p_n v_{m-1}(s+1,1), \\&
    c_{f}+q\beta p_n v_{m-1}(1,1)\}.
    \end{split}
\end{equation*}
Therefore, $v_{m}(s, r), \forall m \geq 1, r \in\{0,1\}$ which is the minimum of two non-decreasing functions of $s$, is itself a non-decreasing function of $s$.\end{proof}

\begin{proposition} \label{prop:pulloptimalr}
For $r=1$, there exists a $\tau_m$ such that the policy that minimizes $v_m(s,1)$ is given by:
\begin{equation} 
\label{eq:u(s,1)}
    u_{m}^{*}(s, 1)= \begin{cases}1 & s>\tau_{m} \\ 0 & s \leq \tau_{m}\end{cases}.
\end{equation}
\end{proposition}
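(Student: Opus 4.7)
The plan is a direct one-step threshold argument built on the Bellman recursion established in Lemma~1. Specializing that recursion to $r=1$ gives
\begin{equation*}
v_m(s,1) \;=\; \min\bigl\{B_m(s),\; A_m\bigr\},
\end{equation*}
where $B_m(s) := \lambda_n c_a s + q\beta p_n v_{m-1}(s+1,1)$ is the projected cost of the action $u=0$ and $A_m := c_f + q\beta p_n v_{m-1}(1,1)$ is the projected cost of $u=1$. The key observation is that $A_m$ does not depend on $s$, whereas $B_m(s)$ is strictly increasing in $s$: the linear term $\lambda_n c_a s$ is strictly increasing, and by Lemma~1 the term $v_{m-1}(s+1,1)$ is non-decreasing in $s$. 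Moreover $B_m(s)\to\infty$ as $s\to\infty$, so the two curves must cross.

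With that monotonicity in hand, the proof is immediate. I would define $\tau_m := \sup\{s \geq 0 : B_m(s) \leq A_m\}$, with the convention $\sup\emptyset = 0$. Strict monotonicity of $B_m$ forces the sublevel set $\{s : B_m(s) \leq A_m\}$ to be a half-interval rooted at $0$, so for every $s > \tau_m$ we have $B_m(s) > A_m$, making $u=1$ the minimizer, while for every $s \leq \tau_m$ we have $B_m(s) \leq A_m$, making $u=0$ a minimizer. This is exactly the threshold structure claimed in the proposition, and no induction beyond what Lemma~1 already supplies is needed at step $m$.

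The main obstacle is essentially cosmetic and consists of two edge cases. First, if $B_m(0) \geq A_m$, then $\tau_m = 0$ and updating is optimal at every $s > 0$, which still fits the stated piecewise form. Second, at any single point $s$ where $B_m(s) = A_m$, ties can be broken in favor of $u=0$ without changing $v_m(s,1)$. Both adjustments are trivial. The only mildly subtle point worth flagging is that this threshold $\tau_m$ is in general different from the scalar $\tau_n^*$ that appears in Theorem~\ref{thm:costpull}; pinning down the latter is a separate step that will invoke uniform boundedness of $\{\tau_m\}$ and a vanishing-discount passage to the average-cost criterion, exactly as in the push-based argument of Appendix~A, but that step lies outside the present proposition.
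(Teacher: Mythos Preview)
Your argument is correct and follows essentially the same route as the paper: write $v_m(s,1)$ as the minimum of an $s$-independent term (the $u=1$ cost) and an $s$-monotone term (the $u=0$ cost, monotone by Lemma~1), and conclude the threshold structure from that comparison. The paper additionally states the base case $\tau_1=c_f/(\lambda_n c_a)$ explicitly, but your appeal to Lemma~1 already covers all $m\geq 1$, so nothing is missing.
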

\begin{proof}(Proposition \ref{prop:pulloptimalr}): For $m=1$, we have $\tau_{1}=c_{f} / \lambda_n c_{a}$ and the assumption holds. According to the definition of $v_m(s,r)$ and focusing on the jump times of $r_n$, we have:
\begin{equation*} 
\begin{split}
    \hspace{-.1in} v_{m}(s, 1)=\min  \{ \underbrace{  \lambda_n c_{a} s +q v_{m-1}(s+1,1)}_{u_{m}=0},  \underbrace{c_{f}+q v_{m-1}(1,1)}_{u_{n}=1}\},
    \end{split}
\end{equation*}
where the first term is the projected cost when the request is served from the edge-cache, i.e., $u_m=0$, and the second term is the projected cost of fetching the request from the back-end database. Moreover, according to Lemma 1, $v_{m-1}(s,r)$ is a non-decreasing function of $s$. Therefore, if for a given $s$, $u_m^*(s,r)=1$, then for $\forall s'>s$ we have $u_m^*(s',r)=1$. Besides, if for a given $s$, $u_m^*(s,r)=0$, then for $\forall s'>s$, we have $u_m^*(s',r)=0$. Thus, there exists a threshold $\tau_m$ where the Equation \eqref{eq:u(s,1)} holds.\end{proof}

According to Propositions \ref{prop:pulljump} and \ref{prop:pulloptimalr}, there exists a threshold $\tau_m$ where the optimal action $u_m^*(s,r)$ that minimizes $v_m(x), \forall m$ can be expressed as:
\begin{equation} \label{eq:pullhypothesis} 
u_{m}^{*}(s, r)= \begin{cases}r & s>\tau_{m} \\ 0 & s \leq \tau_{m}\end{cases}.
\end{equation}

Next, we show that the sequence $\{v_m(x)\}$ converges to $v(x)$ as $m$ grows. Since $\{v_m\}$ is a monotonically non-decreasing sequence, we show that there exists a policy that when $m$ increases, the cost is always finite. Assume that $\tau$ is given and is constant. Consider the policy $\bar{u}_{\tau}=\bar{u}_{2\tau}=\bar{u}_{3\tau}=...=1$, where we only fetch every $\tau$ time slot. Under such policy we have $(1-q^\tau)v_{\infty}^{\bar{u}}=g(q)<\infty$ where $g(q)=\beta \lambda \sum_{n=1}^{\tau-1}nq^n+q^\tau c_f$ is always finite for a given $\tau$ and $q<1$. Therefore, $v_m(x)\leq v_{m+1}(x)\leq ... \leq v_{\infty}^{\bar{u}}$. According to monotone convergence theorem, there exist $v(x)$ such that $v_{m}(x) \stackrel{m \rightarrow \infty}{\longrightarrow} v(x)$. Thus, the optimal policy that minimizes $v(x)$ is obtained through $\lim_{m\to \infty } u_{m}^{*}(s, r) =u^{*}(s, r)$ and is given by:
\begin{equation*} 
u^{*}(s, r)= \begin{cases}r & s>\tau^*\\ 0 & s \leq \tau^* \end{cases},
\end{equation*}
where $\tau^*= \lim_{m\to \infty } \tau_m \leq \frac{c_f}{\lambda c_a}$, since $\tau_1=\frac{c_f}{\lambda c_a}, \forall q \in (0,1)$ and $0 \leq \tau_{m}\leq \tau _1, \forall m.$ The later follows from Lemma 1 that for any given $q \in (0,1)$ we have $v_{m-1}(s+1,1)\geq v_{m-1}(1,1), \forall m, s\geq 0.$ Then, for the average cost setup, we now argue that there exists a solution to the average cost minimization problem and this solution admits an optimal control of threshold type. 

The existence of a solution to the average cost optimality follows from the moment nature of the cost function and the existence of a solution leading to a finite cost, via the convex analytic method \cite{Borkar2} and its implication for average cost optimality inequality under a stable control policy via \cite[Prop. 5.2]{hernandez1993existence}. See \cite[Theorem 2.1 and Section 3.2.1]{arapostathis2021optimality} for further discussions for both existence and optimality inequality, respectively.

Now, consider the relative value functions solving the discounted cost optimality equation for each $q \in (0,1):
J_q(x) - J_q(x_0) = \min_u (c(x,u) + q \int (J_q(y) - J_q(x_0))p(dy|x,u) )$
Via the threshold optimality of finite horizon problems, since the threshold policies, $\tau_q$, are uniformly bounded, there exists a converging subsequence so that $\tau_{q_m} \to \tau^*$. Then 
$\lim_{m \to \infty} J_{q_m}(x) - J_{q_m}(x_0) =\lim_{m \to \infty}  \min_u (c(x,u) + q_m \int (J_{q_m}(y) - J_{q_m}(x_0))p(dy|x,u) )
= \lim_{m \to \infty} (c(x,u_{\tau_{q_m}}) + q_m \int (J_{q_m}(y) - J_{q_m}(x_0))p(dy|x,u_{\tau_{q_m}}))$
By considering a converging sequence and from \cite[Theorem 5.4.3]{WinNT}, we have that average cost optimality inequality applies under the limit threshold policy. Thus by relating the DCOE and ACOI, the optimality of threshold policies follows.

Finally, using the obtained result that the optimal policy for the average cost problem has a threshold structure, the optimal threshold, and the subsequent optimal cost are calculated in \cite{abolhassani2021fresh} (Theorem 1) for a threshold-based policy.

\bibliographystyle{IEEEtran}
\bibliography{paper}

\begin{thebibliography}{10}
\providecommand{\url}[1]{#1}
\csname url@samestyle\endcsname
\providecommand{\newblock}{\relax}
\providecommand{\bibinfo}[2]{#2}
\providecommand{\BIBentrySTDinterwordspacing}{\spaceskip=0pt\relax}
\providecommand{\BIBentryALTinterwordstretchfactor}{4}
\providecommand{\BIBentryALTinterwordspacing}{\spaceskip=\fontdimen2\font plus
\BIBentryALTinterwordstretchfactor\fontdimen3\font minus
  \fontdimen4\font\relax}
\providecommand{\BIBforeignlanguage}[2]{{%
\expandafter\ifx\csname l@#1\endcsname\relax
\typeout{** WARNING: IEEEtran.bst: No hyphenation pattern has been}%
\typeout{** loaded for the language `#1'. Using the pattern for}%
\typeout{** the default language instead.}%
\else
\language=\csname l@#1\endcsname
\fi
#2}}
\providecommand{\BIBdecl}{\relax}
\BIBdecl

\bibitem{liu2016caching}
D.~Liu, B.~Chen, C.~Yang, and A.~F. Molisch, ``Caching at the wireless edge:
  design aspects, challenges, and future directions,'' \emph{IEEE
  Communications Magazine}, vol.~54, no.~9, pp. 22--28, 2016.

\bibitem{yao2019mobile}
J.~Yao, T.~Han, and N.~Ansari, ``On mobile edge caching,'' \emph{IEEE
  Communications Surveys \& Tutorials}, vol.~21, no.~3, pp. 2525--2553, 2019.

\bibitem{paschos2018role}
G.~S. Paschos, G.~Iosifidis, M.~Tao, D.~Towsley, and G.~Caire, ``The role of
  caching in future communication systems and networks,'' \emph{IEEE Journal on
  Selected Areas in Communications}, vol.~36, no.~6, pp. 1111--1125, 2018.

\bibitem{zhang2018cooperative}
K.~Zhang, S.~Leng, Y.~He, S.~Maharjan, and Y.~Zhang, ``Cooperative content
  caching in 5g networks with mobile edge computing,'' \emph{IEEE Wireless
  Communications}, vol.~25, no.~3, pp. 80--87, 2018.

\bibitem{abolhassani2019wireless}
B.~Abolhassani, J.~Tadrous, and A.~Eryilmaz, ``Wireless multicasting for
  content distribution: Stability and delay gain analysis,'' in \emph{IEEE
  INFOCOM 2019-IEEE Conference on Computer Communications}.\hskip 1em plus
  0.5em minus 0.4em\relax IEEE, 2019, pp. 1--9.

\bibitem{abolhassani2020delay}
------, ``Delay gain analysis of wireless multicasting for content
  distribution,'' \emph{IEEE/ACM Transactions on Networking}, vol.~29, no.~2,
  pp. 529--542, 2020.

\bibitem{jamali2018statistical}
M.~V. Jamali, A.~Mirani, A.~Parsay, B.~Abolhassani, P.~Nabavi, A.~Chizari,
  P.~Khorramshahi, S.~Abdollahramezani, and J.~A. Salehi, ``Statistical studies
  of fading in underwater wireless optical channels in the presence of air
  bubble, temperature, and salinity random variations,'' \emph{IEEE
  Transactions on Communications}, vol.~66, no.~10, pp. 4706--4723, 2018.

\bibitem{pouryousef2023quantum}
S.~Pouryousef, N.~K. Panigrahy, and D.~Towsley, ``A quantum overlay network for
  efficient entanglement distribution,'' in \emph{IEEE INFOCOM 2023-IEEE
  Conference on Computer Communications}.\hskip 1em plus 0.5em minus
  0.4em\relax IEEE, 2023, pp. 1--10.

\bibitem{zhang2019price}
M.~Zhang, A.~Arafa, J.~Huang, and H.~V. Poor, ``How to price fresh data,'' in
  \emph{2019 International Symposium on Modeling and Optimization in Mobile, Ad
  Hoc, and Wireless Networks (WiOPT)}.\hskip 1em plus 0.5em minus 0.4em\relax
  IEEE, 2019, pp. 1--8.

\bibitem{shapiro1998information}
C.~Shapiro, H.~R. Varian, S.~Carl \emph{et~al.}, \emph{Information rules: A
  strategic guide to the network economy}.\hskip 1em plus 0.5em minus
  0.4em\relax Harvard Business Press, 1998.

\bibitem{najm2016age}
E.~Najm and R.~Nasser, ``Age of information: The gamma awakening,'' in
  \emph{2016 IEEE International Symposium on Information Theory (ISIT)}.\hskip
  1em plus 0.5em minus 0.4em\relax Ieee, 2016, pp. 2574--2578.

\bibitem{candan2001enabling}
K.~S. Candan, W.-S. Li, Q.~Luo, W.-P. Hsiung, and D.~Agrawal, ``Enabling
  dynamic content caching for database-driven web sites,'' in \emph{Proceedings
  of the 2001 ACM SIGMOD international conference on Management of data}, 2001,
  pp. 532--543.

\bibitem{shi2002workload}
W.~Shi, R.~Wright, E.~Collins, and V.~Karamcheti, ``Workload characterization
  of a personalized web site and its implications for dynamic content
  caching,'' in \emph{Proceedings of the Seventh International Workshop on Web
  Caching and Content Distribution (WCW’02)}.\hskip 1em plus 0.5em minus
  0.4em\relax Citeseer, 2002, pp. 1--16.

\bibitem{li2003freshness}
W.-S. Li, O.~Po, W.-P. Hsiung, K.~S. Candan, and D.~Agrawal, ``Freshness-driven
  adaptive caching for dynamic content web sites,'' \emph{Data \& Knowledge
  Engineering}, vol.~47, no.~2, pp. 269--296, 2003.

\bibitem{wu2019dynamic}
P.~Wu, J.~Li, L.~Shi, M.~Ding, K.~Cai, and F.~Yang, ``Dynamic content update
  for wireless edge caching via deep reinforcement learning,'' \emph{IEEE
  Communications Letters}, vol.~23, no.~10, pp. 1773--1777, 2019.

\bibitem{kam2017information}
C.~Kam, S.~Kompella, G.~D. Nguyen, J.~E. Wieselthier, and A.~Ephremides,
  ``Information freshness and popularity in mobile caching,'' in \emph{2017
  IEEE International Symposium on Information Theory (ISIT)}.\hskip 1em plus
  0.5em minus 0.4em\relax IEEE, 2017, pp. 136--140.

\bibitem{xu2020proactive}
X.~Xu, C.~Feng, S.~Shan, T.~Zhang, and J.~Loo, ``Proactive edge caching in
  content-centric networks with massive dynamic content requests,'' \emph{IEEE
  Access}, vol.~8, pp. 59\,906--59\,921, 2020.

\bibitem{gao2020design}
J.~Gao, S.~Zhang, L.~Zhao, and X.~Shen, ``The design of dynamic probabilistic
  caching with time-varying content popularity,'' \emph{IEEE Transactions on
  Mobile Computing}, vol.~20, no.~4, pp. 1672--1684, 2020.

\bibitem{mehrizi2019popularity}
S.~Mehrizi, A.~Tsakmalis, S.~ShahbazPanahi, S.~Chatzinotas, and B.~Ottersten,
  ``Popularity tracking for proactive content caching with dynamic factor
  analysis,'' in \emph{2019 IEEE/CIC International Conference on Communications
  in China (ICCC)}.\hskip 1em plus 0.5em minus 0.4em\relax IEEE, 2019, pp.
  875--880.

\bibitem{kumar2020optimized}
S.~Kumar and R.~Tiwari, ``Optimized content centric networking for future
  internet: dynamic popularity window based caching scheme,'' \emph{Computer
  Networks}, vol. 179, p. 107434, 2020.

\bibitem{zhong2018two}
J.~Zhong, R.~D. Yates, and E.~Soljanin, ``Two freshness metrics for local cache
  refresh,'' in \emph{2018 IEEE International Symposium on Information Theory
  (ISIT)}.\hskip 1em plus 0.5em minus 0.4em\relax IEEE, 2018, pp. 1924--1928.

\bibitem{zhang2020reinforcement}
X.~Zhang, G.~Zheng, S.~Lambotharan, M.~R. Nakhai, and K.-K. Wong, ``A
  reinforcement learning-based user-assisted caching strategy for dynamic
  content library in small cell networks,'' \emph{IEEE Transactions on
  Communications}, vol.~68, no.~6, pp. 3627--3639, 2020.

\bibitem{masood2020learning}
A.~Masood, D.~S. Lakew, and S.~Cho, ``Learning based content caching for
  wireless networks,'' in \emph{2020 International Conference on Information
  Networking (ICOIN)}.\hskip 1em plus 0.5em minus 0.4em\relax IEEE, 2020, pp.
  74--75.

\bibitem{azimi2018online}
S.~M. Azimi, O.~Simeone, A.~Sengupta, and R.~Tandon, ``Online edge caching and
  wireless delivery in fog-aided networks with dynamic content popularity,''
  \emph{IEEE Journal on Selected Areas in Communications}, vol.~36, no.~6, pp.
  1189--1202, 2018.

\bibitem{zhang2019learning}
X.~Zhang, G.~Zheng, S.~Lambotharan, M.~R. Nakhai, and K.-K. Wong, ``A learning
  approach to edge caching with dynamic content library in wireless networks,''
  in \emph{2019 IEEE Global Communications Conference (GLOBECOM)}.\hskip 1em
  plus 0.5em minus 0.4em\relax IEEE, 2019, pp. 1--6.

\bibitem{abad2019dynamic}
M.~S.~H. Abad, E.~Ozfatura, O.~Ercetin, and D.~G{\"u}nd{\"u}z, ``Dynamic
  content updates in heterogeneous wireless networks,'' in \emph{2019 15th
  Annual Conference on Wireless On-demand Network Systems and Services
  (WONS)}.\hskip 1em plus 0.5em minus 0.4em\relax IEEE, 2019, pp. 107--110.

\bibitem{maatouk2020age}
A.~Maatouk, M.~Assaad, and A.~Ephremides, ``On the age of information in a csma
  environment,'' \emph{IEEE/ACM Transactions on Networking}, vol.~28, no.~2,
  pp. 818--831, 2020.

\bibitem{chen2020age}
X.~Chen, C.~Wu, T.~Chen, H.~Zhang, Z.~Liu, Y.~Zhang, and M.~Bennis, ``Age of
  information aware radio resource management in vehicular networks: A
  proactive deep reinforcement learning perspective,'' \emph{IEEE Transactions
  on wireless communications}, vol.~19, no.~4, pp. 2268--2281, 2020.

\bibitem{bastopcu2019age}
M.~Bastopcu and S.~Ulukus, ``Age of information for updates with distortion,''
  in \emph{2019 IEEE Information Theory Workshop (ITW)}.\hskip 1em plus 0.5em
  minus 0.4em\relax IEEE, 2019, pp. 1--5.

\bibitem{yang2020age}
H.~H. Yang, A.~Arafa, T.~Q. Quek, and H.~V. Poor, ``Age of information in
  random access networks: A spatiotemporal study,'' in \emph{GLOBECOM 2020-2020
  IEEE Global Communications Conference}.\hskip 1em plus 0.5em minus
  0.4em\relax IEEE, 2020, pp. 1--6.

\bibitem{abolhassani2021fresh}
B.~Abolhassani, J.~Tadrous, A.~Eryilmaz, and E.~Yeh, ``Fresh caching for
  dynamic content,'' in \emph{IEEE INFOCOM 2021-IEEE Conference on Computer
  Communications}.\hskip 1em plus 0.5em minus 0.4em\relax IEEE, 2021, pp.
  1--10.

\bibitem{abolhassani2020achieving}
B.~Abolhassani, J.~Tadrous, and A.~Eryilmaz, ``Achieving freshness in
  single/multi-user caching of dynamic content over the wireless edge,'' in
  \emph{2020 18th International Symposium on Modeling and Optimization in
  Mobile, Ad Hoc, and Wireless Networks (WiOPT)}.\hskip 1em plus 0.5em minus
  0.4em\relax IEEE, 2020, pp. 1--8.

\bibitem{abolhassani2021single}
------, ``Single vs distributed edge caching for dynamic content,''
  \emph{IEEE/ACM Transactions on Networking}, vol.~30, no.~2, pp. 669--682,
  2021.

\bibitem{abolhassani2021optimal}
------, ``Optimal load-splitting and distributed-caching for dynamic content,''
  in \emph{2021 19th International Symposium on Modeling and Optimization in
  Mobile, Ad hoc, and Wireless Networks (WiOpt)}.\hskip 1em plus 0.5em minus
  0.4em\relax IEEE, 2021, pp. 1--8.

\bibitem{abolhassani2022fresh}
B.~Abolhassani, J.~Tadrous, A.~Eryilmaz, and E.~Yeh, ``Fresh caching of dynamic
  content over the wireless edge,'' \emph{IEEE/ACM Transactions on Networking},
  vol.~30, no.~5, pp. 2315--2327, 2022.

\bibitem{abolhassani2023optimal}
B.~Abolhassani, J.~Tadrous, and A.~Eryilmaz, ``Optimal load-splitting and
  distributed-caching for dynamic content over the wireless edge,''
  \emph{IEEE/ACM Transactions on Networking}, 2023.

\bibitem{wessels2001web}
D.~Wessels, \emph{Web caching}.\hskip 1em plus 0.5em minus 0.4em\relax "
  O'Reilly Media, Inc.", 2001.

\bibitem{hara2002cooperative}
T.~Hara, ``Cooperative caching by mobile clients in push-based information
  systems,'' in \emph{Proceedings of the eleventh international conference on
  Information and knowledge management}, 2002, pp. 186--193.

\bibitem{gwertzman1995case}
J.~S. Gwertzman and M.~Seltzer, ``The case for geographical push-caching,'' in
  \emph{Proceedings 5th Workshop on Hot Topics in Operating Systems
  (HotOS-V)}.\hskip 1em plus 0.5em minus 0.4em\relax IEEE, 1995, pp. 51--55.

\bibitem{gwertzman1997analysis}
J.~Gwertzman and M.~Seltzer, ``An analysis of geographical push-caching,'' in
  \emph{Proceedings of the 5th IEEE Workshop on Hot Topics in Operating
  Systems}.\hskip 1em plus 0.5em minus 0.4em\relax Citeseer, 1997, pp. 51--55.

\bibitem{zhu2004stochastic}
S.~Zhu and C.~V. Ravishankar, ``Stochastic consistency, and scalable pull-based
  caching for erratic data sources.'' in \emph{VLDB}, 2004, pp. 192--203.

\bibitem{kambalakatta2004profile}
R.~Kambalakatta, M.~Kumar, and S.~K. Das, ``Profile based caching to enhance
  data availability in push/pull mobile environments,'' in \emph{The First
  Annual International Conference on Mobile and Ubiquitous Systems: Networking
  and Services, 2004. MOBIQUITOUS 2004.}\hskip 1em plus 0.5em minus 0.4em\relax
  IEEE, 2004, pp. 74--83.

\bibitem{lee2005performance}
Y.~J. Lee and D.~C. Shin, ``Performance of caching strategies for pull-based
  data broadcast systems in mobile computing environments,'' \emph{Journal of
  Computer Information Systems}, vol.~45, no.~4, pp. 102--115, 2005.

\bibitem{rosberg1982optimal}
Z.~Rosberg, P.~Varaiya, and J.~Walrand, ``Optimal control of service in tandem
  queues,'' \emph{IEEE Transactions on Automatic Control}, vol.~27, no.~3, pp.
  600--610, 1982.

\bibitem{WinNT}
\BIBentryALTinterwordspacing
S.~Yuksel. (2022) Optimization and control of stochastic systems. [Online].
  Available:
  \url{https://mast.queensu.ca/~yuksel/LectureNotesOnStochasticControl.pdf}
\BIBentrySTDinterwordspacing

\bibitem{Borkar2}
V.~S. Borkar, ``Convex analytic methods in {M}arkov decision processes,'' in
  \emph{{H}andbook of Markov Decision Processes, E. A. Feinberg, A. Shwartz
  (Eds.)}.\hskip 1em plus 0.5em minus 0.4em\relax Kluwer, Boston, MA, 2001, pp.
  347--375.

\bibitem{hernandez1993existence}
O.~Hern{\'a}ndez-Lerma, ``Existence of average optimal policies in markov
  control processes with strictly unbounded costs,'' \emph{Kybernetika},
  vol.~29, no.~1, pp. 1--17, 1993.

\bibitem{arapostathis2021optimality}
A.~Arapostathis and S.~Y{\"u}ksel, ``Convex analytic method revisited: Further
  optimality results and performance of deterministic stationary control
  policies,'' \emph{Journal of Mathematical Analysis and Applications (to
  appear), also arXiv:2103.06502}, 2022.

\end{thebibliography}

\begin{IEEEbiography}[{\includegraphics[width=1.5in,height=1.3in,clip,keepaspectratio]{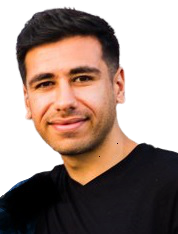}}\vspace{-.1in}]{Bahman Abolhassani } is a Presidential Postdoctoral Fellow at the Bradley Department of Electrical and Computer Engineering at Virginia Tech, Blacksburg, VA. He received his Ph.D. degree in electrical engineering from the ECE Department at The Ohio
State University in 2023. He also received the B.Sc. and M.Sc. degrees in electrical engineering from the EE Department at Sharif University of Technology in 2015 and 2017, respectively. His research interests include communication networks, optimization, machine learning, and cybersecurity.

\end{IEEEbiography}

\begin{IEEEbiography}
    [{\includegraphics[width=1.1in,height=1.4in,clip,keepaspectratio]{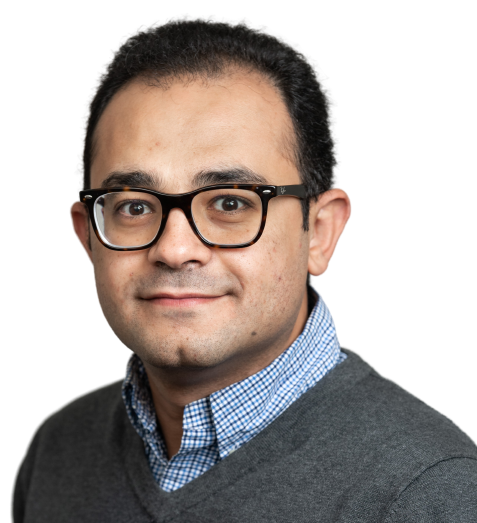}}\vspace{-.2in}]{John Tadrous }is an associate professor of electrical and computer engineering at Gonzaga University. He received his Ph.D. degree in electrical engineering from the ECE Department at The Ohio State University in 2014, MSc degree in wireless communications from the Center of Information Technology at Nile University in 2010, and BSc degree from the EE Department at Cairo University in 2008. Between 2016 and 2021 he served as an assistant professor of electrical and computer engineering at Gonzaga University. From May 2014 to August 2016, he was a post-doctoral research associate with the ECE Department at Rice University. His research interests include machine learning, autonomous robots, network optimization, and cybersecurity. Dr. Tadrous served as a technical program committee member for several conferences such as Mobihoc, COMSNETS, and WiOpt. 
\end{IEEEbiography}

\begin{IEEEbiography}
    [{\includegraphics[width=1.1in,height=1.4in,clip,keepaspectratio]{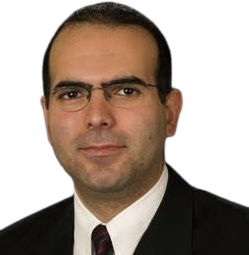}}\vspace{-.2in}]{Atilla Eryilmaz} (S'00 / M'06 / SM'17) received his M.S. and Ph.D. degrees in Electrical and Computer Engineering from the University of Illinois at Urbana-Champaign in 2001 and 2005, respectively. Between 2005 and 2007, he worked as a Postdoctoral Associate at the Laboratory for Information and Decision Systems at the Massachusetts Institute of Technology. Since 2007, he has been at The Ohio State University, where he is currently a Professor and the Graduate Studies Chair of the Electrical and Computer Engineering Department. Dr. Eryilmaz's research interests span optimal control of stochastic networks, machine learning, optimization, and information theory. He received the NSF-CAREER Award in 2010 and two Lumley Research Awards for Research Excellence in 2010 and 2015. He is a co-author of the 2012 IEEE WiOpt Conference Best Student Paper, subsequently received the 2016 IEEE Infocom, 2017 IEEE WiOpt, 2018 IEEE WiOpt, and 2019 IEEE Infocom Best Paper Awards. He has served as: a TPC co-chair of IEEE WiOpt in 2014, ACM Mobihoc in 2017, and IEEE Infocom in 2022; an Associate Editor (AE) of IEEE/ACM Transactions on Networking between 2015 and 2019; an AE of IEEE Transactions on Network Science and Engineering between 2017-2022; and is currently an AE of the IEEE Transactions on Information Theory since 2022.
\end{IEEEbiography}

\begin{IEEEbiography}
[{\includegraphics[width=1.1in,height=1.4in,clip,keepaspectratio]{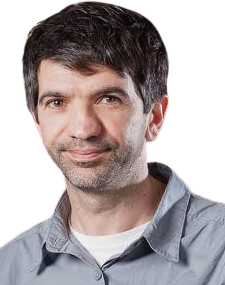}}\vspace{-.05in}]{Serdar Yüksel}(S'02, M'11) received his B.Sc. degree in Electrical and Electronics Engineering from Bilkent University in 2001; M.S. and Ph.D. degrees in Electrical and Computer Engineering from the University of Illinois at Urbana-
Champaign in 2003 and 2006, respectively. He was a post-doctoral researcher at Yale University before joining Queen's University as an Assistant Professor in the Department of Mathematics and Statistics, where he is now a Professor. His research interests are on stochastic control theory, information theory and probability. Prof. Yüksel is a co-author of several books, recipient of several awards; and is a senior editor with IEEE Transactions on Automatic Control and has been an Associate Editor for IEEE Transactions on Automatic Control, Automatica, Systems and Control Letters, and Mathematics of Control, Signals, and Systems. 
\end{IEEEbiography}

\end{document}